\newcommand{\D}{\mathcal{D}}
\newcommand{\CC}{\mathcal{C}}
\newcommand{\Z}{\mathbb{Z}}
\newcommand{\N}{\mathbb{N}}
\newcommand{\id}{\textnormal{id}}
\newcommand{\inv}{^{-1}}
\newcommand{\cay}{\textnormal{Cay}}
\newtheorem{theorem}{Theorem}
\newtheorem{lemma}{Lemma} 
\newtheorem{prop}{Proposition}  
\newtheorem{prob}{Problem} 
\begin{document}
 
\title{Marked Groups with Isomorphic Cayley Graphs but Different Borel Combinatorics}
\author{Felix Weilacher}
\maketitle

\begin{abstract}
\noindent We construct pairs of marked groups with isomorphic Cayley graphs but different Borel chromatic numbers for the free parts of their shift graphs. This answers a question of Kechris and Marks. We also show that these graphs have different Baire measurable and measure chromatic numbers, answering analogous versions of the question.
\end{abstract}

%%%%%%%%%%%%%%%%%%%%%%%%%%%%%%%%%%%%%%%%%%%%%%%%%%%%%%%%%%%%%%%%%%%%%%%%%%%%%%%%%%%%%%%%%%%%%%%%%%%%%%%%%%%%%%%%%%%%

\section{Introduction}\label{sec:intro}

%%%%%%%%%%%%%%%%%%%%%%%%%%%%%%%%%%%%%%%%%%%%%%%%%%%%%%%%%%%%%%%%%%%%%%%%%%%%%%%%%%%%%%%%%%%%%%%%%%%%%%%%%%%%%%%%%%%%

Let $X$ be a set, and $G \subset X \times X$ a symmetric irreflexive relation on $X$. $G$ is called a \textit{graph} on $X$ and the elements of $X$ are called the \textit{vertices} of $G$. When the meaning is clear, we shall sometimes identify a graph with its set of vertices. A pair $\{x,y\}$ of distinct elements of $X$ with $(x,y) \in G$ is called an \textit{edge} of $G$, and when this is the case $x$ and $y$ are said to be $\textit{adjacent}$. The equivalence classes of the equivalence relation on $X$ generated by $G$ are called the \textit{connected components} of $G$. A map $c:X \rightarrow Y$ is called a $\textit{coloring}$ of $G$ if it sends adjacent elements of $X$ to distinct elements of $Y$. The elements of $Y$ are called \textit{colors}. If $|Y| = k$, it is called a \textit{$k$-coloring}. The \textit{chromatic number} of $G$, denoted $\chi(G)$, is defined as the least $k$ such that $G$ admits a $k$-coloring.

A large portion of the emerging field of descriptive graph combinatorics investigates these notions in the Borel setting: Let $X$ now be a standard Borel space. A graph $G$ on $X$ is called a \textit{Borel graph} if $G$ is Borel in the product space $X \times X$. A coloring $c:X \rightarrow Y$ is called a $\textit{Borel coloring}$ if $Y$ is also a standard Borel space and $c$ is a Borel map. The $\textit{Borel chromatic number}$ of $G$, denoted $\chi_B(G)$ is defined as the least $k$ such that $G$ admits a Borel $k$-coloring. For a survey covering these and related ideas, see $\cite{KM}$.

This paper will focus on Borel graphs that arise in the following way: Let $\Gamma$ be a finitely generated group and $S \subset \Gamma$ a finite symmetric set of generators with $\id \notin S$. The pair $(\Gamma,S)$ is called a \textit{marked group}. All groups in this paper are assumed to be finitely generated. When there is no confusion as to what the generators are, we will sometimes identify a marked group with the underlying group. Let $a:\Gamma \times X \rightarrow X$ be a free Borel action of $\Gamma$ on some standard Borel space $X$. Form the Borel graph $G(S,a)$ on $X$ by connecting $x,y \in X$ exactly when $s \cdot x = y$ for some $s \in S$.

For every such marked group $(\Gamma,S)$, one such graph is especially important: Give $2^\Gamma$ the product topology, and let $\Gamma$ act on it via the \textit{left shift action} $a_0$:
\begin{equation}
(g \cdot x)(h) = x(g\inv h) \ \ \ \textnormal{for } g,h \in \Gamma, x \in 2^\Gamma 
\end{equation}
This action is not free unless $\Gamma$ is trivial, but its \textit{free part},
\begin{equation}
F(2^\Gamma) = \{x \in 2^\Gamma \mid g \cdot x \neq x, \ \forall g \in \Gamma \textnormal{ with } g \neq \id \}, 
\end{equation}
is a $G_\delta$ subset of $2^\Gamma$, and so we can consider the restriction of the left shift action to it. By \cite{ST}, for any $a$ and $X$ as in the previous paragraph, there is a Borel $\Gamma$-equivariant map $X \rightarrow F(2^\Gamma)$. Therefore studying the graph $G(a_0,S)$, called the \textit{shift graph} of $\Gamma$, can give us information about all of the graphs $G(a,S)$. In particular, we always have $\chi_B(G(a,S)) \leq \chi_B(G(a_0,S))$. We will always refer to the shift graph of $\Gamma$ by its underlying set $F(2^\Gamma)$.

Let $\cay(\Gamma)$, called the \textit{Cayley graph} of $\Gamma$, be the graph $G(a_1,S)$, where $a_1$ is the action of $\Gamma$ on itself given by left multiplication. Clearly, as a graph, $F(2^\Gamma)$ is a disjoint union of copies of $\cay(\Gamma)$, uncountably many if $\Gamma$ is infinite and finitely many otherwise. Thus, if $\Gamma$ is finite, Borel combinatorial properties of $F(2^\Gamma)$ can be understood fully by studying $\cay(\Gamma)$. In particular
\begin{equation}
\chi_B(F(2^\Gamma)) = \chi(F(2^\Gamma)) = \chi(\cay(\Gamma)).
\end{equation}
This will not hold in general when $\Gamma$ is infinite, but one still might hope to get Borel combinatorial information from the Cayley graph in such cases. For example, in \cite{KM} (Problem 5.36), the following question is raised:

\begin{prob}\label{prob:main}
Let $\Gamma$ and $\Delta$ be two marked groups with isomorphic Cayley graphs. Must we have $\chi_B(F(2^\Gamma)) = \chi_B(F(2^\Delta))$?
\end{prob}

Note that by the previous paragraph $\Gamma$ and $\Delta$ must be infinite if this equality is to fail. If instead of (vertex) colorings we consider edge colorings, that is, maps $c:G \rightarrow Y$ with $c(x,y) = c(y,x)$ for all edges $\{x,y\}$ and $c(x,y) \neq c(y,z)$ unless $x = z$, the analogue of Problem \ref{prob:main} has an easy negative answer. Specifically, if we consider the marked groups given by $\Z$ and $\Z/2\Z * \Z/2\Z$ with their standard generators, the shift graph of the former has no Borel edge 2-coloring but the shift graph of the latter does. See \cite{KST} for a discussion of this. This is unsurprising, though, since when working with shift graphs we are able to label our edges with the generator(s) that they correspond to in a Borel fashion. 

In this paper we prove the perhaps more surprising result that Problem \ref{prob:main} has a negative answer as well. Specifically, we prove the following:

\begin{theorem}\label{th:main}
Let $k \geq 3$. There exist marked groups $\Gamma_k$ and $\Delta_k$ with isomorphic Cayley graphs but for which \begin{enumerate}[label=(\roman*)]
	\item $\chi_B(F(2^{\Gamma_k})) = \chi(F(2^{\Gamma_k})) = \chi(F(2^{\Delta_k})) = \chi(\cay(\Gamma_k)) = \chi(\cay(\Delta_k)) = k,$  
	\item $\chi_B(F(2^{\Delta_k})) = k+1$.
\end{enumerate}
\end{theorem}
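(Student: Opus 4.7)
The plan is to construct, for each $k \geq 3$, explicit marked groups $\Gamma_k$ and $\Delta_k$ together with a graph isomorphism $\cay(\Gamma_k) \cong \cay(\Delta_k)$ that does not preserve the labeling of edges by generators. Taking the introduction's $\Z$ versus $\Z/2\Z * \Z/2\Z$ example for edge colorings as the model, the asymmetry between the two sides should come from $\Gamma_k$ admitting some additional algebraic structure---such as a surjection onto $\Z$ or a coherent orientation on its generators---that $\Delta_k$ lacks, with $\Delta_k$ instead having the corresponding generators replaced by involutions. A natural first attempt is to build $\Gamma_k$ using a factor of $\Z$ and $\Delta_k$ using a factor of $\Z/2\Z * \Z/2\Z$, combined with a common auxiliary piece that forces the chromatic number of the shared Cayley graph to be exactly $k$ (for example by ensuring an odd clique of size $k$ or a $(k-1)$st-power-of-cycle structure in it).

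For the equalities in part (i), the chromatic number of the common Cayley graph is computed directly from the construction, and since $F(2^\Gamma)$ is a disjoint union of copies of $\cay(\Gamma)$, one automatically has $\chi(F(2^\Gamma)) = \chi(\cay(\Gamma))$ for both groups. The key nontrivial upper bound is $\chi_B(F(2^{\Gamma_k})) \leq k$. I would exploit the surjection $\Gamma_k \twoheadrightarrow \Z$ to pull back a Borel tiling of the induced $\Z$-orbits on $F(2^{\Gamma_k})$ by long intervals, producing a Borel partition of $F(2^{\Gamma_k})$ into finite Borel tiles. Each tile is colored using a fixed graph $k$-coloring of the corresponding Cayley ball, and the local defects at the finitely many boundary layers between tiles are repaired by exploiting the slack available when tile lengths are taken large; the existence of the $\Z$-quotient is what makes this kind of tile-and-repair strategy Borel.

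The heart of the argument is the lower bound $\chi_B(F(2^{\Delta_k})) \geq k+1$. I would prove the stronger statement that the Baire measurable and the measure chromatic numbers both exceed $k$, in the spirit of the Kechris--Solecki--Todorcevic proof that $\chi_B(F(2^\Z)) = 3 > 2$. Any Baire measurable (resp.\ $\mu$-measurable) $k$-coloring of $F(2^{\Delta_k})$, restricted to a comeager (resp.\ conull) invariant set, should induce a coherent family of proper graph $k$-colorings of the orbits that is equivariant under the $\Delta_k$-action. The involutive generators of $\Delta_k$ then force a parity-type ``flip'' consistency condition on this coloring which, combined with the odd-cycle structure inside $\cay(\Delta_k)$ that witnesses $\chi \geq k$, yields a contradiction on a non-meager, resp.\ positive measure, set.

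The main obstacle is making the lower bound argument for $\Delta_k$ genuinely use the group structure rather than just the graph structure, since the graph itself does admit a $k$-coloring by part (i). The essential ingredient must therefore be the action of $\Delta_k$'s involutions on the space of proper graph $k$-colorings of $\cay(\Delta_k)$: I need to show that this action has no Baire measurable (resp.\ measurable) equivariant selector, whereas the corresponding action by the non-involutive generators of $\Gamma_k$ does. Identifying the right combinatorial obstruction---an analogue of the odd-cycle argument used in the $\Z$ case, but sharp enough to kick in exactly at chromatic number $k$ rather than at chromatic number $\chi + 1$---is the central difficulty, and I expect it to dictate the detailed algebraic form of $\Gamma_k$ and $\Delta_k$.
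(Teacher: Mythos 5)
Your overall architecture matches the paper's (one group with a coherent $\Z$-structure, a twin with that structure twisted by involutions, a parity-type obstruction for the lower bound), but as written the proposal has two genuine gaps, one of which is fatal to the logic. The fatal one is the upper bound $\chi_B(F(2^{\Gamma_k})) \leq k$: you propose to tile the $\Z$-direction with long Borel intervals, color each tile by a fixed $k$-coloring of a Cayley ball, and ``repair the local defects at the boundary layers by exploiting the slack available when tile lengths are taken large.'' Stated at this level of generality, that argument proves too much: it would apply verbatim to $F(2^{\Delta_k})$ (which also surjects onto $\Z$ in the paper's first family) and to $F(2^\Z)$ with $2$ colors, both of which have Borel chromatic number strictly above the chromatic number of the Cayley graph. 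Whether a boundary defect can be repaired with slack is exactly the point at issue, and it depends on a rigidity property of $k$-colorings of $\cay(\Gamma_k)$ that you have not isolated. In the paper, every $k$-coloring of a block of $\Z/k\Z$-orbits induces a $k$-cycle $\tau_E \in S_k$ on each orbit $E$, and for $\Gamma_k$ one has $\tau_{(0,1)\cdot E} = \tau_E$ (so a transversal of the orbits can be slid step by step to match up consecutive marked orbits, and then translated by the central $\Z/k\Z$ to produce the coloring), whereas for $\Delta_k$ one has $\tau_{(0,1)\cdot E} = \tau_E\inv$, which is precisely the $\Z/2$ obstruction that blocks the repair. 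Without identifying this dichotomy your part (i) and part (ii) arguments cannot be made to diverge.

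The second gap is that part (ii) is an equality, and you never address the upper bound $\chi_B(F(2^{\Delta_k})) \leq k+1$. This is not automatic: the vertex degrees of the paper's examples grow linearly in $k$ (e.g.\ $4k-8$ for the $\Z$ versus $\Z/2\Z * \Z/2\Z$ family), so Brooks-type bounds give only $\chi_B \leq d$ or $d+1$, far above $k+1$. The paper needs an explicit construction: between consecutive elements of a sparse Borel discrete set, the single extra color $k+1$ is used as a ``free cell'' to realize an arbitrary permutation of the $k$ colors as a product of at most $k-1$ transpositions, reconciling the locally chosen colorings; in the $\Z/2\Z * \Z/2\Z$ case this must additionally be combined with a Borel (Lusin--Novikov) choice of one endpoint of each gap to break the orientation ambiguity. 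On the lower bound itself your instinct is right, but you explicitly defer ``identifying the right combinatorial obstruction,'' which is the heart of the proof: the paper partitions the $k$-cycles of $S_k$ into two halves $C_1 \sqcup C_1\inv$ (possible since $k \geq 3$), observes that the alternation $\tau_{(0,1)\cdot E} = \tau_E\inv$ turns any Borel $k$-coloring into either a Borel $2$-coloring of $F(2^\Z)$ or a Borel orientation of the components of $F(2^{\Z/2\Z * \Z/2\Z})$, and then quotes the known facts that neither exists. Until those three ingredients are supplied, the proposal is a plausible outline rather than a proof.
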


Obviously a shift graph will not have chromatic number 1 unless the group is trivial. To rule out an example where one of the Borel chromatic numbers is 2, we have the following proposition pointed out to us by A.S. Kechris:

\begin{prop}\label{prop:2}
Let $(\Gamma,S)$ be an infinite marked group. Then $\chi_B(F(2^\Gamma)) > 2$. 
\end{prop}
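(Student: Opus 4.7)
The plan is to split into two cases according to whether $\cay(\Gamma)$ is bipartite. If $\cay(\Gamma)$ is not bipartite, it contains an odd cycle, and since each connected component of $F(2^\Gamma)$ is isomorphic to $\cay(\Gamma)$, even the ordinary chromatic number satisfies $\chi(F(2^\Gamma)) \geq 3$, and we are done.

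From now on, assume $\cay(\Gamma)$ is bipartite. Then the parity of the length of a word $s_1 \cdots s_n \in \Gamma$ with each $s_i \in S$ depends only on the resulting group element, so we obtain a well-defined homomorphism $\pi: \Gamma \to \Z/2\Z$ sending each $s \in S$ to $1$. Its kernel $K$ has index two in $\Gamma$; since $\Gamma$ is infinite, so is $K$. Now suppose for contradiction that $c: F(2^\Gamma) \to \{0,1\}$ is a Borel 2-coloring, and set $A = c^{-1}(0)$ and $B = c^{-1}(1)$. For each $s \in S$, adjacency forces $c(s \cdot x) = 1 - c(x)$, so $s \cdot A = B$. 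Iterating, for any $g \in K$---a product of an even number of elements of $S$---we have $g \cdot A = A$, so $A$ is $K$-invariant.

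To derive a contradiction, equip $2^\Gamma$ with the Bernoulli product measure $\mu$; this is shift-invariant and concentrates on $F(2^\Gamma)$, since the set of non-free points is a countable union of $\mu$-null sets. Via a choice of coset representatives, the restriction of the shift action to $K$ is conjugate to the Bernoulli $K$-shift with alphabet $2^{K \backslash \Gamma}$, which is mixing and hence ergodic since $K$ is infinite. Shift-invariance of $\mu$ together with $s \cdot A = B$ forces $\mu(A) = \mu(B) = \tfrac{1}{2}$, contradicting that $A$ is a Borel $K$-invariant set in an ergodic $K$-system. The main step is locating the right invariant object: once we recognize that the even-length words form an infinite subgroup $K$ acting ergodically on $(2^\Gamma, \mu)$ via a Bernoulli factor, the contradiction is immediate.
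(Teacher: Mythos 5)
Your proof is correct. The first half coincides with the paper's: both reduce to the bipartite case (the paper folds the non-bipartite case into the observation that a Borel $2$-coloring of $F(2^\Gamma)$ restricts to a $2$-coloring of $\cay(\Gamma)$), extract the parity homomorphism $\Gamma \to \Z/2\Z$, note its kernel $K$ is infinite of index $2$, and observe that the two color classes are $K$-invariant and swapped by each generator. Where you diverge is the mechanism for the final contradiction: the paper stays in the Baire-category world, using the topological $0$--$1$ law (Theorem 8.46 in \cite{K95}) together with the topological transitivity of the $K$-action to conclude that each color class is meager or comeager, which is incompatible with a generator interchanging them; you instead pass to the Bernoulli $(\tfrac12,\tfrac12)$ product measure, check it concentrates on the free part, and invoke ergodicity of the restricted $K$-shift (via the coset-induced conjugacy with a Bernoulli $K$-shift) to rule out a $K$-invariant set of measure $\tfrac12$. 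Both finishes are valid and of comparable length. The trade-off is which strengthening you get for free: the paper's version shows $\chi_{BM}(F(2^\Gamma)) > 2$, while yours shows $\chi_\mu(F(2^\Gamma)) > 2$ for the Bernoulli measure and hence $\chi_M(F(2^\Gamma)) > 2$; the paper in fact records exactly your measure-theoretic variant later, in the proposition of Section \ref{sec:probs} extending Proposition \ref{prop:2} to the Baire measurable and measure chromatic numbers.
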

\begin{proof}
Suppose, to the contrary, we have a Borel 2-coloring $c:F(2^\Gamma) \rightarrow \{1,2\}$. Since $\cay(\Gamma)$ must also be 2-colorable, let $\varphi:\Gamma \rightarrow \{1,2\}$ be a 2-coloring of it such that $\varphi(\id) = 2$. Then note that $\varphi(sg) = 1+\varphi(g)$ (mod 2) for each $g \in \Gamma$ and $s \in S$, and so $\varphi$ is actually a nontrivial homomorphism into $\Z/2\Z$. (This observation is from \cite{CK}.)

Let $A_i = c\inv(i)$ for $i=1,2$. Note that both $A_i$'s are invariant under the action of $\ker \varphi$. Since $\ker \varphi$ is index 2 in $\Gamma$, it is infinite. This implies it has the following homogeneity property: For any nonempty open sets $U,V \subset F(2^\Gamma)$, there is an element $g \in \ker \varphi$ such that $g \cdot U \cap V \neq \emptyset$. Therefore, by Theorem 8.46 in \cite{K95}, each $A_i$ is meager or comeager. Since they partition $F(2^\Gamma)$, one must be meager and the other comeager. However, since $s \cdot A_1 = A_2$ for any $s \in S$, this is a contradiction.
\end{proof}

We'll prove the $k=3$ case of Theorem \ref{th:main} in Section \ref{sec:ex} and the full theorem in Section \ref{sec:big}. 

In Section \ref{sec:iso} we'll add to the surprise of the theorem by proving that for each pair $\Gamma_k$ and $\Delta_k$ constructed in this paper, there is a Borel bijection $f:F(2^{\Gamma_k}) \rightarrow F(2^{\Delta_k})$ which preserves connected components, that is, such that $f(x)$ and $f(y)$ are in the same connected component if and only if $x$ and $y$ are. Thus, even though the connected components of our two graphs can be matched up in a Borel fashion and are also isomorphic as graphs, they differ in an invariant as simple as the Borel chromatic number.

We end the paper by discussing the analogue of Problem \ref{prob:main} in the measurable setting in Section \ref{sec:measure} and posing some new problems related to strengthening Theorem \ref{th:main} in Section \ref{sec:probs}.

%%%%%%%%%%%%%%%%%%%%%%%%%%%%%%%%%%%%%%%%%%%%%%%%%%%%%%%%%%%%%%%%%%%%%%%%%%%%%%%%%%%%%%%%%%%%%%%%%%%%%%%%%%%%%%%%%

\section{A First Example}\label{sec:ex}

%%%%%%%%%%%%%%%%%%%%%%%%%%%%%%%%%%%%%%%%%%%%%%%%%%%%%%%%%%%%%%%%%%%%%%%%%%%%%%%%%%%%%%%%%%%%%%%%%%%%%%%%%%%%%%%%%

We start with the simplest example known to us of a pair of marked groups $\Delta$ and $\Gamma$ with isomorphic Cayley graphs but different Borel chromatic numbers for their shift graphs. We separate these from the rest so that a reader simply interested in getting an answer to Problem \ref{prob:main} can do so without bothering with the details necessary to get every $k$ in Theorem \ref{th:main}.

We first recall some simple facts and notions which will be useful here and throughout. If $G$ is a graph on $X$ and $x \in X$, then by the \textit{degree} of $x$ we mean the number of elements of $X$ adjacent to $x$. We say $G$ has \textit{bounded degree $d$} if the degree of each element of $X$ is at most $d$. Note that if $(\Gamma,S)$ is a marked group and $|S| = d$, every element of the shift graph has degree exactly $d$. 

Let $x,y \in X$. A \textit{path} from $x$ to $y$ in $G$ is a finite sequence $x_0,x_1,\ldots,x_N$ of points in $x$ such that $x = x_0$,$y = x_N$, and $(x_i,x_{i+1}) \in G$ for each $0 \leq i < N$. We call $N$ the \textit{length} of the path. We define $\rho_G(x,y)$, the \textit{path distance} between $x$ and $y$, to be the shortest length of any path from $x$ to $y$, or $\infty$ if there is no such path. If $A,B \subset X$ are nonempty, we define $\rho_G(A,B)$ as the minimum of $\rho_G(x,y)$ taken over all $x \in A$, $y \in B$. Finally, we call a set $A \subset X$ \textit{$(N,G)$-discrete} if $\rho_G(x,y) > N$ for all distinct $x,y \in A$. When the graph is clear from the context, we will just write `$\rho$' and `$N$-discrete'. A 1-discrete set is also called \textit{independent}. Note that a coloring is exactly a map for which the pre-image of each point is independent.

\begin{lemma}\label{discrete}
Let $G$ be a Borel graph on a standard Borel space $X$ of bounded degree $d < \aleph_0$. Then for any finite $N$, $X$ has a maximal Borel $(N,G)$-discrete subset. 
\end{lemma}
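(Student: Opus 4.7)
I would first reduce to the case $N=1$ by passing to an auxiliary graph. Define $G^{(N)}$ on $X$ by declaring distinct $x,y \in X$ adjacent iff $\rho_G(x,y) \leq N$. A set $A \subset X$ is maximal $(N,G)$-discrete exactly when it is a maximal independent set in $G^{(N)}$. The graph $G^{(N)}$ is Borel, since adjacency in $G^{(N)}$ can be expressed by existentially quantifying over sequences $(x = x_0, x_1, \ldots, x_n = y)$ with $n \leq N$ and each consecutive pair in $G$; there are countably many finite sequences to consider and each condition is Borel. Moreover $G^{(N)}$ has degree bounded by $d' := d + d(d-1) + \cdots + d(d-1)^{N-1}$, which is finite. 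So it suffices to find, for any bounded-degree Borel graph, a maximal Borel independent set.

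For this, I would appeal to the Kechris--Solecki--Todorcevic theorem (Theorem 4.5 in \cite{KM}), which gives a Borel proper $(d'+1)$-coloring $c: X \to \{1, \ldots, d'+1\}$ of $G^{(N)}$. With color classes $C_i := c^{-1}(i)$ in hand, I would inductively build a chain of Borel independent sets: set $I_1 := C_1$, and
\[
I_{i+1} := I_i \cup \bigl\{x \in C_{i+1} : x \text{ has no } G^{(N)}\text{-neighbor in } I_i \bigr\}.
\]
Each $I_i$ is independent by construction, and $I_{d'+1}$ is maximal, since every $x \in X$ lies in some $C_i$, and is either in $I_i$ or has a neighbor in $I_{i-1} \subset I_{d'+1}$.

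The one point that needs justification is that each $I_{i+1}$ is Borel, i.e.\ that the set $\{x \in X : x \text{ has a } G^{(N)}\text{-neighbor in } I_i\}$ is Borel. But this set is the projection onto the first coordinate of the Borel set $G^{(N)} \cap (X \times I_i)$, whose fibers over $X$ have size at most $d'$; by the Luzin--Novikov theorem, such a projection is Borel. This is the only mildly delicate step; everything else is bookkeeping. Since $d'$ depends only on $d$ and $N$, the construction terminates in finitely many steps, producing the desired maximal Borel $(N,G)$-discrete set.
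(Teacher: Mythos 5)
Your proposal is correct and follows essentially the same route as the paper: both reduce to $N=1$ by forming the Borel bounded-degree ``power graph'' and then invoke the Kechris--Solecki--Todorcevic bounded-degree coloring theorem. The only difference is that the paper cites Proposition 4.2 of \cite{KST} as a black box for extracting a maximal Borel independent set from a countable Borel coloring, whereas you reprove that proposition via the greedy pass through the color classes (with the Luzin--Novikov justification of Borelness, which is exactly the delicate point); the citation of \cite{KM} for the coloring theorem should be \cite{KST}.
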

\begin{proof}
Define a graph $G'$ on $X$ by $(x,y) \in G'$ if and only if $x \neq y$ and $\rho_G(x,y) \leq N$. Then $G'$ is still Borel and has bounded degree $d + d^2 + \cdots + d^N$. By Proposition 4.5 in \cite{KST}, $\chi_B(G')$ is countable, so by Proposition 4.2 in \cite{KST}, there is a Borel maximal $(1,G')$-discrete subset $A \subset X$. Note $(1,G')$-discrete sets are exactly $(N,G)$-discrete sets, though.
\end{proof}

Our marked groups are as follows: Let $\Gamma = (\Z/3\Z \times \Z, \{(1,0),(2,0),(0,\pm1)\})$, and let $\Delta = (\Z/3\Z \rtimes_{\varphi} \Z, \{(1,0),(2,0),(0,\pm1)\})$, where $\varphi:\Z \rightarrow \textnormal{Aut}(\Z/3\Z)$ is the homomorphism sending $1$ to the inversion map. An isomorphism $\cay(\Gamma) \rightarrow \cay(\Delta)$ is given by 
\begin{equation}
(i,n) \mapsto ((-1)^n i,n).
\end{equation}
Clearly the Cayley graphs have chromatic number 3.

\begin{theorem}\label{th:3}
$\chi_B(F(2^\Gamma)) = 3$
\end{theorem}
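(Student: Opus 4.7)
The lower bound $\chi_B(F(2^\Gamma))\ge 3$ is immediate: every connected component of $F(2^\Gamma)$ is isomorphic to $\cay(\Gamma)$, which contains a triangle from the $\Z/3\Z$ factor. For the upper bound, my plan is to imitate the homomorphism coloring $(i,n)\mapsto i+n\pmod 3$ of $\cay(\Gamma)$ in a Borel way. Throughout, write $\sigma$ and $\tau$ for the actions of $(1,0)$ and $(0,1)$, respectively.

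First I would build a ``position-in-triangle'' function. The subgraph of $F(2^\Gamma)$ spanned by the $\sigma$-edges is a disjoint union of triangles and has bounded degree, so Lemma~\ref{discrete} with $N=1$ produces a Borel maximal independent set $A_\sigma$, which picks out exactly one vertex of each triangle. Defining $f_1(x)$ to be the unique $k\in\Z/3\Z$ with $\sigma^{-k}(x)\in A_\sigma$ yields a Borel $f_1:F(2^\Gamma)\to\Z/3\Z$ satisfying $f_1(\sigma x)=f_1(x)+1$. Because $\sigma$ and $\tau$ commute, the quotient $Y:=F(2^\Gamma)/\langle\sigma\rangle$ is a standard Borel space (realized as $A_\sigma$) carrying a free Borel $\Z$-action $\bar\tau$; let $\pi:F(2^\Gamma)\to Y$ be the projection. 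The function $\alpha(x):=f_1(\tau x)-f_1(x)\bmod 3$ satisfies $\alpha(\sigma x)=\alpha(x)$, so descends to $\bar\alpha:Y\to\Z/3\Z$. For any Borel $\bar e:Y\to\Z/3\Z$, the function $c(x):=f_1(x)+\bar e(\pi(x))\bmod 3$ automatically satisfies $c(\sigma x)-c(x)\equiv 1\pmod 3$, while $c(\tau x)-c(x)\equiv\alpha(x)+\bigl(\bar e(\bar\tau\pi(x))-\bar e(\pi(x))\bigr)\pmod 3$. So the whole problem reduces to producing a Borel $\bar e:Y\to\Z/3\Z$ with $\bar e(\bar\tau y)-\bar e(y)\not\equiv-\bar\alpha(y)\pmod 3$ for every $y\in Y$, and this is the main obstacle.

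My plan for $\bar e$ is a local construction anchored to a maximal discrete set. Apply Lemma~\ref{discrete} to the $\bar\tau$-graph on $Y$ (of degree $2$) with $N=3$ to obtain a Borel maximal $3$-discrete set $A\subset Y$; along each $\bar\tau$-orbit consecutive elements of $A$ sit at distance $k\in\{4,5,6,7\}$ ($k\ge 4$ by discreteness and $k\le 7$ by maximality). Declare $\bar e\equiv 0$ on $A$. For each gap $a,\bar\tau a,\ldots,\bar\tau^k a$ between consecutive $A$-points, I would choose increments $\delta_0,\ldots,\delta_{k-1}\in\Z/3\Z$ with $\delta_j\neq-\bar\alpha(\bar\tau^j a)$ for each $j$ and $\sum_{j=0}^{k-1}\delta_j\equiv 0\pmod 3$, and then set $\bar e(\bar\tau^j a):=\sum_{i<j}\delta_i$. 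Such a tuple exists because the two allowed values of $\delta_j$ at each step differ by a nonzero element of $\Z/3\Z$, and for $k\ge 2$ the subset sums $\sum_{j\in T}(b_j-a_j)$ (where $\{a_j,b_j\}$ is the allowed pair at step $j$) already cover all of $\Z/3\Z$ by a short case check. Choosing the lexicographically least valid tuple on each gap makes the whole assignment Borel, completing the construction of $\bar e$ and hence of the promised Borel $3$-coloring $c$.
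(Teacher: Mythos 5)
Your argument is correct, but it takes a genuinely different route from the paper's. The paper applies Lemma~\ref{discrete} once to the \emph{full} shift graph to get a maximal Borel independent set $A$, shows by a pigeonhole argument that $A$ must meet every $\Z/3\Z$-orbit exactly once, and then observes that $(1,0)$ is central in $\Gamma$, so its action is a graph automorphism; hence $A$, $(1,0)\cdot A$, $(2,0)\cdot A$ are three Borel independent sets partitioning $F(2^\Gamma)$, and the proof ends there. You instead fix the ``position within each triangle'' first (via a maximal independent set for the $\sigma$-edges only) and then reduce to a residual one-dimensional problem: choosing a Borel $\Z/3\Z$-valued function along the $\bar\tau$-lines whose increments avoid one forbidden value at each step, which you solve by a gap-filling argument anchored at a maximal discrete set. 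All the steps check out --- in particular $\alpha$ is $\sigma$-invariant precisely because $\Gamma$ is abelian, the gap lengths lie in $\{4,\dots,7\}$, and the subset-sum observation does give a valid increment tuple summing to $0$ on each gap --- so this is a complete proof. What the paper's approach buys is brevity: centrality of $(1,0)$ makes the entire $\Z$-direction issue disappear, since an independent transversal automatically translates to a coloring. What your approach buys is a clearer view of \emph{where} commutativity is used (your $\alpha$ fails to descend to the quotient for the semidirect product $\Delta$, which is exactly why the construction cannot produce a Borel $3$-coloring of $F(2^\Delta)$, consistent with Theorem~\ref{th:4}), and a template closer in spirit to the more elaborate gap-filling colorings the paper later uses in Theorems~\ref{th:ab} and~\ref{th:non}.
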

\begin{proof}
By Lemma \ref{discrete}, let $A$ be a maximal Borel independent subset of the shift graph. Let $E$ be some $\Z/3\Z$ orbit. $E$ can contain at most one point in $A$. Suppose $E$ contains no points in $A$. Then by maximality, for each $x \in E$, either $(0,1) \cdot x$ or $(0,-1) \cdot x$ is in $A$. Then by the pidgeonhole principle, either $(0,1) \cdot E$ or $(0,-1) \cdot E$ contains two points of $A$, contradicting independence. Therefore $A$ contains exactly one element from each $\Z/3\Z$ orbit. One can check that the action of a central element on a shift graph preserves the graph relation, so $(1,0) \cdot A$ and $(2,0) \cdot A$ are still Borel independent sets. Since these sets along with $A$ partition $F(2^\Gamma)$, they constitute a Borel 3-coloring.
\end{proof}

When working with $\Delta$, we can obtain a Borel set having 1 element in each $\Z/3\Z$ orbit in the same way, but the action of $(1,0)$ no longer preserves the graph relation, so we cannot use the above strategy to get a 3-coloring from it. In fact, we cannot use any strategy at all:

\begin{theorem}\label{th:4}
$\chi_B(F(2^\Delta)) = 4$
\end{theorem}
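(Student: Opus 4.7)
The plan has two parts: a relatively routine upper bound $\chi_B(F(2^\Delta)) \leq 4$ and the much more interesting lower bound $\chi_B(F(2^\Delta)) > 3$. For the upper bound, I would first obtain a Borel transversal $A$ for the $\Z/3\Z$-action as in the proof of Theorem \ref{th:3}; the argument there only uses the graph structure and applies verbatim. Coloring $A$ with color $1$ reduces the problem to $3$-coloring the remaining two sheets $(1,0)A$ and $(2,0)A$ in a Borel way, and this can be arranged by combining the $\Z/3\Z$-sheet index with a Borel $3$-coloring of the quotient $\Z$-line structure on $A$ (available since any free Borel $\Z$-action is $3$-colorable). The verification is a case analysis based on the commutation relation $(0,1)(i,0) = (-i,1)$; I expect the extra fourth color to be precisely enough freedom to avoid every potential conflict.

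For the lower bound, suppose for contradiction that $c: F(2^\Delta) \to \{1,2,3\}$ is a Borel $3$-coloring. Since $\{x,(1,0)x,(2,0)x\}$ is a triangle in the shift graph, $c$ must send it to three distinct values. This gives a Borel map $P : F(2^\Delta) \to S_3$ via $P_x(i) := c((i,0)x)$, a bijection for every $x$. The key claim is that its sign $\varpi(x) := \textnormal{sign}(P_x) \in \{\pm 1\}$ satisfies $\varpi(g \cdot x) = (-1)^n \varpi(x)$ for every $g = (i,n) \in \Delta$; in other words, $\varpi$ transforms like the character of the quotient $\Delta \to \Z/2\Z$ sending $(i,n) \mapsto n \bmod 2$.

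To verify the claim, I would do two computations. Under $(1,0)$, $P_{(1,0)x}(i) = c((i+1,0)x) = P_x(i+1)$, so $P$ is twisted by the even $3$-cycle $i \mapsto i+1$ and $\varpi$ is preserved. Under $(0,1)$, the relation $(0,1)(i,0) = (-i,1)$ turns the edge condition $c((i,0)x) \neq c((0,1)(i,0)x)$ into $P_x(i) \neq P_{(0,1)x}(-i)$ for all $i$; thus $P_x$ and $P_{(0,1)x} \circ \iota$, where $\iota(i) = -i$, are bijections $\Z/3\Z \to \{1,2,3\}$ that disagree at every input. Their ratio in $S_3$ is therefore a derangement of three elements, hence a $3$-cycle, hence even. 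Since $\iota$ is the odd transposition $(1\,2)$ fixing $0$, this forces $P_x$ and $P_{(0,1)x}$ to differ by an odd permutation, flipping $\varpi$ as claimed.

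The remaining step is the Baire category argument of Proposition \ref{prop:2}. The Borel sets $B_\pm = \varpi^{-1}(\pm 1)$ partition $F(2^\Delta)$ and are invariant under the infinite index-$2$ normal subgroup $N = \{(i,n) \in \Delta : n \text{ even}\}$, so by the homogeneity of the shift action of an infinite subgroup, each of $B_+, B_-$ is meager or comeager; since they partition the space exactly one is meager. But $(0,1)$ acts as a homeomorphism of $F(2^\Delta)$ and swaps $B_+$ with $B_-$, contradicting the fact that meagerness is preserved by homeomorphisms. The main obstacle I anticipate is getting the parity computation under $(0,1)$ correct — with that in hand, the rest is either routine bookkeeping or a direct appeal to Proposition \ref{prop:2}.
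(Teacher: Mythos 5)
Your lower bound is correct and is, in substance, the paper's own argument in different clothing. The sign $\varpi(x)$ of the bijection $P_x$ is exactly the paper's invariant recording which of the two $3$-cycles $(1\,2\,3)$, $(1\,3\,2)$ the action of $(1,0)$ induces on the colors of the $\Z/3\Z$-orbit of $x$: that induced permutation is the conjugate of the rotation $i \mapsto i+1$ by $P_x$, and the conjugate flips precisely when $P_x$ is odd. Your computation that $(0,1)$ reverses the sign (via $(0,1)(i,0)=(-i,1)$, the everywhere-disagreement forcing an even ratio, and the oddness of $i \mapsto -i$) is correct and matches the paper's observation that neighboring orbits carry opposite $3$-cycles. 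Where the paper pushes the resulting $(0,1)$-alternating, $\Z/3\Z$-invariant $\{1,2\}$-valued map forward along an explicit equivariant embedding $F(2^\Z) \to F(2^\Delta)$ and quotes $\chi_B(F(2^\Z))=3$, you instead run the Baire category argument of Proposition \ref{prop:2} directly on $F(2^\Delta)$ using the infinite index-$2$ subgroup $\{(i,n) : n \text{ even}\}$. Both routes are valid; yours simply inlines the standard proof that $F(2^\Z)$ admits no Borel $2$-coloring.

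The gap is in the upper bound. You propose to color the transversal $A$ with one color and then Borel $3$-color the two remaining sheets ``by combining the $\Z/3\Z$-sheet index with a Borel $3$-coloring of the quotient $\Z$-line,'' but no rule is exhibited, and the natural candidates fail. Using the sheet index alone is the method of Theorem \ref{th:3}, and it breaks here because $(1,0)$ is not central in $\Delta$, so $(1,0)\cdot A$ and $(2,0)\cdot A$ need not be independent --- that failure is the entire point of the example. A color depending only on the pair (sheet index, quotient color) cannot work either: the matching between consecutive triangles $E$ and $(0,1)\cdot E$ is the orientation-reversing map $i \mapsto k-i$, where the offset $k$ (the sheet index of $(0,1)\cdot x$ for $x \in A$) varies from orbit to orbit; requiring the non-adjacency condition for every possible $k$ would force consecutive triangles to use disjoint triples of colors, which is impossible with only four colors. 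A genuine construction needs the marker-plus-spare-color transposition machinery of the proof of Theorem \ref{th:non} specialized to $k=3$, or one can simply cite the bound $\chi_B(F(2^\Delta)) \leq 4$ for this degree-$4$ shift graph from 5.12 of \cite{CK}, as the paper does. ``I expect the extra fourth color to be precisely enough freedom'' is the step that needs to become a proof.
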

\begin{proof}
Since $F(2^\Delta)$ has bounded degree 4, the upper bound $\chi_B(F(\N^\Delta)) \leq 4$ follows from 5.12 in \cite{CK}.

For the lower bound, suppose we have some 3-coloring $c:F(2^\Delta) \rightarrow \{1,2,3\}$. Each $\Z/3\Z$ orbit $E$ must use all 3 colors, so the action of $(1,0)$ on it must induce a permutation on the colors which is a 3-cycle in $S_3$: either $(1\ 2\ 3)$ or $(1\ 3\ 2)$. One can easily check that whichever holds for $E$, the other must hold for its neighbors $(0,\pm 1) \cdot E$. Therefore, by considering which holds for each orbit, we can get a 2-coloring of the $\Z/3\Z$ orbits of $F(2^\Delta)$, that is, a $\Z/3\Z$-invariant map $f:F(2^\Delta) \rightarrow \{1,2\}$ with the property that $f(x) \neq f((0,1) \cdot x)$ for all $x$.

Now, give $\Z$ its usual generators and let $g:F(2^\Z) \rightarrow F(2^{\Delta})$ be the map given by 
\begin{equation}
g(x)(i,n) = \begin{cases} x(n) \  \textnormal{ if } i = 0  \\ 0 \  \textnormal{ else.} \end{cases}
\end{equation}
$g$ is clearly continuous and satisfies $g(1 \cdot x) = (0,1) \cdot g(x)$. Therefore $f \circ g$ is a 2-coloring of $F(2^\Z)$. If $c$ is Borel, then $f$ is Borel, so $f \circ g$ is Borel. It is well known that $\chi_B(F(2^\Z)) = 3$, though (see for example \cite{KST}), so $f \circ g$ cannot be Borel. Therefore $c$ cannot be Borel.
\end{proof}

As a Corollary of these calculations, Problem \ref{prob:main} has a negative answer.

%%%%%%%%%%%%%%%%%%%%%%%%%%%%%%%%%%%%%%%%%%%%%%%%%%%%%%%%%%%%%%%%%%%%%%%%%%%%%%%%%%%%%%%%%%%%%%%%%%%%%%%%%%%%%%%%%

\section{Two Proofs of Theorem \ref{th:main}}\label{sec:big}

%%%%%%%%%%%%%%%%%%%%%%%%%%%%%%%%%%%%%%%%%%%%%%%%%%%%%%%%%%%%%%%%%%%%%%%%%%%%%%%%%%%%%%%%%%%%%%%%%%%%%%%%%%%%%%%%%

We now construct two infinite families of pairs of Marked groups $\Gamma_k$ and $\Delta_k$ witnessing Theorem \ref{th:main}. The first is based on and builds on the ideas from Section \ref{sec:ex}. The second takes a different approach, but is simpler overall and demonstrates that (at least one of) the groups involved can be torsion free.

\subsection{Semi-direct Products of $\Z/k\Z$ and $\Z$}

Fix $k \geq 3$. For the first family, let $\Gamma_k = \Z/k\Z \times \Z$ and $\Delta_k = \Z/k\Z \rtimes_{\varphi_k} \Z$, where $\varphi: \Z \rightarrow \textnormal{Aut}(\Z/k\Z)$ is the homomorphism sending 1 to the inversion map. The fixed generators are as follows: $i$ will always represent an arbitrary element of $\Z/k\Z$. For each group, we use the elements represented by the ordered pairs $(i,0)$ for all $i \neq 0$ and $(i,1)$ for all $i \neq 0,1$. We also include the inverses of all these elements to make the generating set symmetric. For $\Gamma_k$, this requires adding the elements $(-i,-1)$ for all $i \neq 0,1$. For $\Delta_k$, this requires adding the elements $(i,-1)$ for all $i \neq 0,1$. Thus, for each group's Cayley graph, the induced graph on each $\Z/k\Z$ orbit $E$ is the complete graph on $k$ vertices. Furthermore, for the Cayley graph on $\Gamma_k$, $(i,n)$ and $(j,n+1)$ are adjacent if and only if $j-i \neq 0,1$. For $\Delta_k$, they are adjacent if and only if $i+j \neq 0,1$.

We'll now construct an isomorphism, $\phi$, from the Cayley graph of $\Gamma_k$ to the Cayley graph of $\Delta_k$. Firstly, for even $n = 2m$, we set $\phi(i,n) = (i-m,n)$. Then, for $n$ odd, we set $\phi(i,n) = (-j,n)$, where $\phi(i,n+1) = (j,n+1)$. Since $\phi$ preserves the $\Z/k\Z$ orbits, we only need to check the edge relations between neighboring $\Z/k\Z$ orbits $E$ and $(0,1) \cdot E$ to check that $\phi$ is an isomorphism. Fix some even $n = 2m$. In the Cayley graph of $\Gamma_k$, $(i,n)$ and $(j,n-1)$ are connected exactly when $j \neq i,i-1$. Meanwhile, $\phi$ sends these points to $(i-m,n)$ and $(m-j,n-1)$, respectively, and these are connected exactly when $(i-m) + (m-j) = i-j \neq 0,1$, which is equivalent to our earlier condition. Similarly, in the Cayley graph of $\Gamma_k$, $(i,n),(j,n+1)$ are connected exactly when $j \neq i,i+1$, and $\phi$ sends these points to $(i-m,n)$ and $(m+1-j,n+1)$, respectively, and these are connected exactly when $(i-m)+(m+1-j)=1+i-j \neq 0,1$, which is equivalent to our earlier condition. This isomorphism is demonstrated in Figure $\ref{fig:iso}$. It can be realized by overlaying the drawings in parts (a) and (b) of the figure.

\begin{figure}
\centering
\includegraphics[width=1\textwidth]{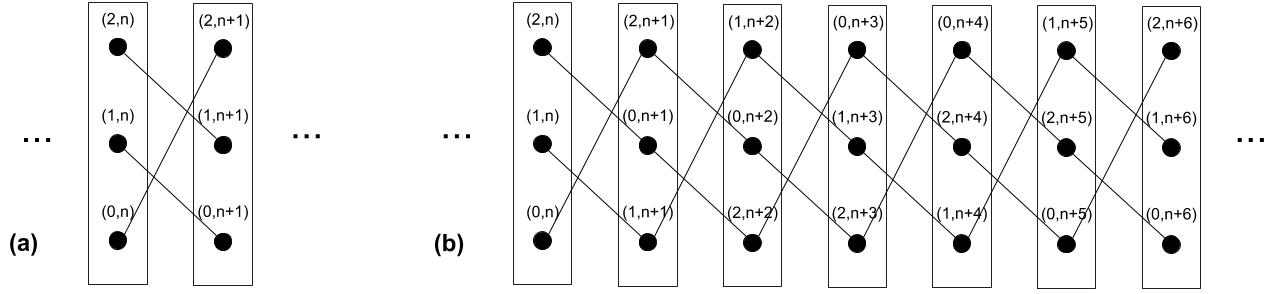}
\caption{\label{fig:iso} (a): A drawing of a segment of the Cayley graph for $\Gamma_3$. Rectangles represent sets on which the induced subgraph is complete, so the edges within them are not drawn. (b): Same as (a), but for $\Delta_3$, and realized using the isomorphism $\phi$.}
\end{figure}

These Cayley graphs clearly have chromatic number $k$. It will be useful to understand how a $k$-coloring on them must look. Suppose we have such a coloring $c$ using the colors 1, 2, $\ldots,$ $k$.  On any $\Z/k\Z$ orbit $E$, all $k$ colors must be used. Let $\tau_E \in S_k$ be the permutation of the colors induced by the action of $(1,0)$ on $E$. Clearly $\tau_E$ is always a $k$-cycle. Fix some $E$, and relabel the colors if necessary so that $\tau_E = (1\ 2\ \cdots\ k)$. Then the restriction of $c$ to $E$ must look like $c(i,n) = i+m$ (mod $k$) for some fixed $m$. Now, suppose we are working in $\Gamma_k$. The neighboring orbit $(0,1) \cdot E$ must again use all $k$ colors, and for each $i$, we must have $c(i,n+1) = c(i,n)$ or $c(i-1,n)$, and so $c(i,n+1) = i+m-\epsilon_i$, where $\epsilon_i = 0$ or 1 for each $i$. Since each color must be used exactly once, we must have either $\epsilon_i = 0$ for all $i$ or $\epsilon_i = 1$ for all $i$.

In either case, $\tau_{(0,1) \cdot E} = \tau_E$, and so we see this holds in general. On the other hand, comparing this situation to the one in $\Delta_k$, we see that for $\Delta_k$, $\tau_{(0,1) \cdot E} = \tau_E\inv$. Note the similarity here to the one in the proof of Theorem \ref{th:4}. Clearly these equations still hold when $E$ is a $\Z/k\Z$ orbit in one of the shift graphs.

We may now compute our chromatic numbers:

\begin{theorem}\label{th:ab}
$\chi_B(F(2^{\Gamma_k})) = k$.
\end{theorem}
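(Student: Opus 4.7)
The lower bound $\chi_B(F(2^{\Gamma_k})) \geq k$ is immediate since each connected component of $F(2^{\Gamma_k})$ is a copy of $\cay(\Gamma_k)$, which has chromatic number $k$. For the upper bound, my plan is to build a Borel $\Z/k\Z$-equivariant coloring $c \colon F(2^{\Gamma_k}) \to \Z/k\Z$, meaning $c((j,0) \cdot x) = c(x) + j$. By the structural analysis just above, such a $c$ automatically has $\tau_E = (1\ 2\ \cdots\ k)$ on each orbit and is a valid coloring if and only if $c((0,1) \cdot x) - c(x) \in \{0, -1\}$ for every $x$. Every $\Z/k\Z$-equivariant $c$ is determined by the Borel transversal $T := c^{-1}(0)$ of the free $\Z/k\Z$-action, and writing $t_E$ for the unique point of $T \cap E$ and $t_{E+1} = (\alpha(E),1) \cdot t_E$ for adjacent orbits $E$, $E+1 := (0,1) \cdot E$, a direct computation reduces the coloring condition to $\alpha(E) \in \{0, 1\}$ for every $E$.

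So the goal is to find a Borel transversal $T$ satisfying $\alpha_T \in \{0, 1\}$. Take any initial Borel transversal $T_0$ (which exists since $\Z/k\Z$ is finite) to get a Borel initial offset $\alpha_0 \colon O \to \Z/k\Z$, where $O := F(2^{\Gamma_k})/(\Z/k\Z)$ is the orbit space (which inherits a free Borel $\Z$-action $E \mapsto (0,1) \cdot E$). Modifying $T_0$ by a Borel $f \colon O \to \Z/k\Z$ via $t_1(E) := (f(E), 0) \cdot t_0(E)$ changes the offset to $\alpha_1(E) = f(E+1) - f(E) + \alpha_0(E)$, so the task becomes finding a Borel $f$ with $f(E+1) - f(E) + \alpha_0(E) \in \{0, 1\}$ for every $E$.

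For this, apply Lemma \ref{discrete} to the degree-$2$ Borel shift graph on $O$ with $N = k$ to produce a maximal Borel $k$-discrete marker set $M \subset O$; maximality forces consecutive markers in each $\Z$-orbit of $O$ to be at distance $d \in [k+1, 2k+1]$. For each interval $[m,m')$ between consecutive markers, pick Borelly a $g \colon [m,m') \to \{0, 1\}$ with $\sum g \equiv \sum \alpha_0 \pmod{k}$ (possible because the required number of $1$'s lies in $[0, k-1] \subseteq [0, d]$, for instance by placing them in the initial positions of the interval), set $f(m) := 0$, and extend by cumulative sums $f(m+j) := \sum_{\ell < j}(g(m+\ell) - \alpha_0(m+\ell))$. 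The residue condition forces $f(m') \equiv 0 \pmod{k}$, so $f$ glues into a global Borel function on $O$, and by construction the new offset is $\alpha_1(E) = g(E) \in \{0, 1\}$, yielding a Borel $k$-coloring. The main technical point is the marker step --- one needs each interval to be long enough to realize any residue mod $k$, which is why taking $N = k$ and thus $d \geq k+1 > k-1$ works --- but this is routine once Lemma \ref{discrete} is in hand.
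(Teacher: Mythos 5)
Your proof is correct and is essentially the paper's argument recast in cocycle language: a $\Z/k\Z$-equivariant coloring whose transversal has offsets $\alpha(E)\in\{0,1\}$ is exactly the same thing as a Borel independent transversal of the $\Z/k\Z$-orbits translated around by the central copy of $\Z/k\Z$, and your marker-interval construction of $f$ (placing the required number of offset-$1$ steps at the start of each interval so the total twist matches mod $k$) is precisely the paper's interpolation $(1,1)\cdot x,\ldots,(i,i)\cdot x,(i,i+1)\cdot x,\ldots,(i,N-1)\cdot x$ between consecutive points of a maximal discrete set obtained from Lemma \ref{discrete}. No gaps; the only cosmetic difference is that you run the marker argument on the quotient $\Z$-space rather than in $F(2^{\Gamma_k})$ itself.
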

\begin{proof}
%Our first step is to construct a Borel independent set $A \subset F(2^{\Gamma_k})$ with the property that $A$ meets each connected component infinitely often in both $\Z$-directions (i.e, if $E$ is some $\Z/k\Z$ orbit, there are positive $N_1,N_2$ such that $(0,N_1) \cdot E$ and $(0,-N_2) \cdot E$ each contain a point of $A$) and that if $E_1,E_2$ are two different $\Z/k\Z$ orbits in the same connected component meeting $A$, the path distance between $E_1$ and $E_2$ is at least $k-1$. This can be done in the following way: First, expand the set of edges of our shift graph by adding one between $x$ and $y$ whenever $x$ and $y$ are in the same connected component with $\Z/k\Z$ orbits within a distance $k-2$ from eachother. This expansion is still Borel and locally finite, so by Proposition 4.2 in \cite{KST}, we can let $A$ be a Borel kernel for it. That $A$ meets each connected component infinitely often in both $\Z$-directions follows from maximality, and the second property is immediate from our construction.

By Lemma \ref{discrete}, let $A \subset F(2^{\Gamma_k})$ be a Borel maximal $k$-discrete set. Then if $E_1$ and $E_2$ are two different $\Z/k\Z$ orbits in the same connected component meeting $A$, $\rho(E_1,E_2) \geq k-1$. Furthermore, such orbits occur infinitely often in each $\Z$-direction in the following sense: Given some $\Z/k\Z$ orbit $E$, there are positive $N_1,N_2$ such that $(0,N_1) \cdot E$ and $(0,-N_2) \cdot E$ both meet $A$.

We now extend $A$ to a Borel independent set $A'$ which meets each $\Z/k\Z$ orbit exactly once. Let $x \in A$ with $\Z/k\Z$ orbit $E$ and $N$ the smallest positive integer such that $(0,N) \cdot E$ contains a point of $A$. Call that point $y$. Then there is a unique $0 \leq i < k$ such that $y = (i,N) \cdot x$. Now simply add the points $(1,1) \cdot x$, $(2,2) \cdot x, \ldots, (i,i) \cdot x, (i,i+1) \cdot x, \ldots, (i,N-1) \cdot x$ to $A'$. This will always work since $N \geq k-1$ by our definition of $A$.

Then, as in the proof of Theorem \ref{th:3}, $\bigsqcup_{i=0}^{k-1} (i,0) \cdot A'$ gives a Borel $k$-coloring of $F(2^{\Gamma_k}))$.
\end{proof}

\begin{theorem}\label{th:non}
$\chi_B(F(2^{\Delta_k})) = k+1$
\end{theorem}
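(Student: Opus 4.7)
The theorem splits into an upper bound $\chi_B \le k+1$ and a lower bound $\chi_B \ge k+1$, and I would prove them separately.

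For the upper bound, the plan is to mirror the first sentence of the proof of Theorem \ref{th:4} by invoking Proposition 5.12 of \cite{CK}. If a direct construction were required, the idea would be to use Lemma \ref{discrete} to fix a Borel maximal $N$-discrete set $A \subset F(2^{\Delta_k})$ (for $N$ sufficiently large depending on $k$), extended to a Borel transversal $A'$ meeting each $\Z/k\Z$ orbit in exactly one point, in the style of the proof of Theorem \ref{th:ab}. The points of $A$ would serve as anchors partitioning each connected component into long segments along the $\Z$-direction; on each segment I would port the standard $k$-coloring $c_0(a,b) = (-1)^b a \pmod k$ of $\cay(\Delta_k)$, using the anchor as the origin and one chosen orientation of $\Z$. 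The colorings of two adjacent segments may disagree in parity across their shared anchor orbit, and the $(k+1)$-st color would be reserved to recolor the handful of conflict vertices at each anchor to enforce a valid global coloring.

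For the lower bound, I would adapt the argument of Theorem \ref{th:4}. Assume toward contradiction that $c : F(2^{\Delta_k}) \to \{1,\ldots,k\}$ is a Borel $k$-coloring. By the analysis immediately preceding Theorem \ref{th:ab}, on each $\Z/k\Z$ orbit $E$ the coloring uses all $k$ colors, the action of $(1,0)$ induces a $k$-cycle $\tau_E \in S_k$, and the relation $\tau_{(0,1) \cdot E} = \tau_E^{-1}$ holds. Since $k \ge 3$, no $k$-cycle equals its own inverse (a $k$-cycle has order $k$), so $\sigma \mapsto \sigma^{-1}$ is a fixed-point-free involution on the set of $k$-cycles; I choose any 2-coloring of this set sending $\sigma$ and $\sigma^{-1}$ to opposite labels. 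Pulling back along $x \mapsto \tau_{E_x}$ (which is Borel since $c$ is) then yields a Borel, $\Z/k\Z$-invariant function $f : F(2^{\Delta_k}) \to \{0,1\}$ satisfying $f((0,1) \cdot x) \neq f(x)$ for every $x$.

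Finally, exactly as in the proof of Theorem \ref{th:4}, I would consider the continuous map $g : F(2^\Z) \to F(2^{\Delta_k})$ defined by $g(x)(i,n) = x(n)$ if $i=0$ and $0$ otherwise; a routine check confirms $g$ lands in the free part and satisfies $g(1 \cdot x) = (0,1) \cdot g(x)$. Then $f \circ g$ is a Borel $2$-coloring of $F(2^\Z)$, contradicting the standard fact $\chi_B(F(2^\Z)) = 3$. The main obstacle I anticipate is the upper bound's patching step if one cannot simply cite \cite{CK} as a black box, namely arranging the extra color to absorb all orientation mismatches in a Borel fashion; the lower bound is a fairly direct generalization of the $k=3$ case, resting only on the easy observation that $k$-cycles in $S_k$ are never self-inverse for $k \ge 3$.
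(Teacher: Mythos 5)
Your lower bound argument is correct and is essentially the paper's: the relation $\tau_{(0,1)\cdot E} = \tau_E\inv$, the partition of the $k$-cycles into two classes swapped by inversion, and the pullback along $g$ to contradict $\chi_B(F(2^\Z)) = 3$ all appear in the text's proof. The gap is in the upper bound. Your primary plan --- citing Proposition 5.12 of \cite{CK} as in Theorem \ref{th:4} --- only yields a bound in terms of the degree of the graph, and the degree of $F(2^{\Delta_k})$ is $|S| = (k-1) + 2(k-2) = 3k-5$. This equals $4 = k+1$ when $k=3$, which is why the citation suffices there, but for $k \geq 4$ we have $3k-5 > k+1$, so no degree-type bound (not even a Brooks-type bound $\chi_B \leq d$) can give $k+1$. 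An explicit construction is unavoidable, and the paper supplies one.

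Your fallback construction then fails at the patching step. Port $c_0(i,n) = (-1)^n i \bmod k$ onto the segment anchored at $x \in A$ and onto the next segment anchored at $y = (m,N)\cdot x$. On the boundary, the point $(j,N)\cdot x$ receives color $j-m$ from the $y$-segment while $(i,N-1)\cdot x$ receives color $(-1)^{N-1}i$ from the $x$-segment; these collide exactly when $j = m - (-1)^N i$, a bijection $i \mapsto j$, so the collisions form a perfect matching between the two $\Z/k\Z$ orbits. When $N$ is even and $m \neq 0,1$ (which you cannot rule out, since $m$ and $N$ are dictated by the maximal discrete set), every one of these $k$ matched pairs is an actual edge. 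Since each orbit induces a complete graph $K_k$, at most one vertex per orbit can be recolored with the spare color $k+1$, and each such recoloring destroys only the single conflict edge through that vertex; hence at most $2$ of the $k$ conflicts can be repaired, which is insufficient for every $k \geq 3$. The mismatch between consecutive anchors is a genuine permutation of the $k$ colors, not ``a handful of conflict vertices,'' and this is exactly why the paper takes anchors at distance at least $3(k-1)$ and spends three consecutive $\Z/k\Z$ orbits per transposition, using color $k+1$ as a temporary register to convert one anchor's coloring into the other's over up to $k-1$ stages. Your proof needs this (or an equivalent) mechanism spread over many orbits rather than a local fix at the boundary.
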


\begin{proof}
We start by showing the Borel chromatic number must be greater than $k$. Suppose we have a $k$-coloring $c: F(2^{\Delta_k}) \rightarrow \{1,2,\ldots,k\}$. Let $C$ be the set of $k$-cycles in $S_k$, and let $C = C_1 \sqcup C_2$ be a partition of $C$ such that $C_2 = C_1\inv$. This exists because $k \geq 3$, so no $k$-cycle is its own inverse. Now define $f:\chi_B(F(2^{\Delta_k})) \rightarrow \{1,2\}$ by sending $x \mapsto i$ exactly when $\tau_E \in C_i$, where $E$ is the $\Z/k\Z$ orbit of $x$. By the remarks preceding Theorem \ref{th:ab}, $f$ is a 2-coloring of the $\Z/k\Z$ orbits of $F(2^{\Delta_k})$. This gives a contradiction to $\chi_B(F(2^\Z)) = 3$ as in the proof of Theorem \ref{th:4}.

We next construct a Borel $(k+1)$-coloring using the colors $1,2,\ldots,k+1$. Let $A \subset F(2^{\Delta_k})$ be a Borel independent set as in the proof of Theorem $\ref{th:ab}$, except with the path distances between different $\Z/k\Z$ orbits meeting $A$ all at least $3(k-1)$. For each $x \in A$, color the $\Z/k\Z$ orbit of $x$ by setting the color of $(i,0) \cdot x$ to be $i+1$ for $0 \leq i \leq k-1$.

We now color the $\Z/k\Z$ orbits between those meeting $A$. Let $x \in A$ with $\Z/k\Z$ orbit $E$ and $N$ the smallest positive integer such that $(0,N) \cdot E$ contains a point of $A$. Let $y = (0,N) \cdot x$. We will color our $\Z/k\Z$ orbits between $x$ and $y$ in a way such that $(0,3n) \cdot E$ uses only the colors 1 through $k$ for each non-negative $n < N/3$. For every such $n$, let $\sigma_n \in S_k$ be the permutation defined by $c(((-1)^n (i-1),3n) \cdot x) = \sigma_n(i)$, where $c$ is our coloring. For example, $\sigma_0 = \id$. Let $\sigma' \in S_k$ be defined by $c(((-1)^N (i-1),0) \cdot y) = \sigma'(i)$. For every possible such $\sigma'$, fix a way of writing $\sigma'$ as a product of transpositions $\sigma' = \rho_l\rho_{l-1} \cdots \rho_1$ with $l \leq k-1$. We can now arrange so that for $0 \leq n \leq l$, $\sigma_n = \rho_n \rho_{n-1} \cdots \rho_1$. We do this inductively as follows: 

If $E'$ is a $\Z/k\Z$ orbit colored by $c$, then by $c_{\textnormal{shift}}(E')$ shall be meant the coloring on $(0,1) \cdot E'$ given by $c_{\textnormal{shift}}(E')(z) = c((0,-1) \cdot z)$. Given a coloring on $(0,3n) \cdot E$ satisfying our condition, and $\rho_{n+1} = (a\ b)$ with $1 \leq a < b \leq k$, color $(0,3n+1) \cdot E$ with $c_{\textnormal{shift}}((0,3n) \cdot E)$, but then swap the color $a$ with the color $k+1$. Then color $(0,3n+2) \cdot E$ with $c_{\textnormal{shift}}((0,3n+1) \cdot E)$, but swap the color $b$ with the color $a$. Finally color $(0,3n+3) \cdot E$ with $c_{\textnormal{shift}}((0,3n+2) \cdot E)$, but swap the color $k+1$ with the color $b$. This can be best understood by looking at the example in Figure \ref{fig:swap}.

\begin{figure}
\centering
\includegraphics[width=0.4\textwidth]{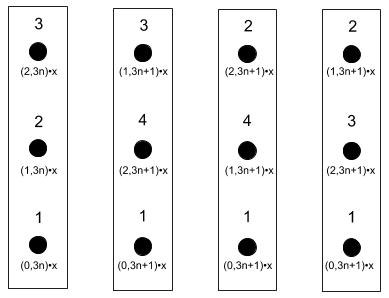}
\caption{\label{fig:swap} An example of the technique for going from $\sigma_n$ to $\sigma_{n+1} = \rho_{n+1} \sigma_n$. Here $k=3$, $\sigma_n = \id$, and $\rho_{n+1} = (2\ 3)$. The numbers over each point are the colors. The rectangles have the same meaning as in Figure \ref{fig:iso}. Edges between $\Z/k\Z$ orbits are omitted. The action of $(0,1)$ sends each point to its immediate neighbor on the right.}
\end{figure}

Thus all the $\Z/k\Z$ orbits between $E$ and $(0,3l) \cdot E$ are colored, and $\sigma_{l} = \sigma'$. Note that $3l \leq N$ by the definition of $A$. Now, for $3l < n < N$, color the orbits $(0,n) \cdot E$ inductively by using $c_{\textnormal{shift}}((0,n-1) \cdot E)$ for each. This completes our Borel coloring. 
\end{proof}

\subsection{$\Z$ versus $\Z/2\Z * \Z/2\Z$}\label{sec:big2}

For the second family, one of the underlying groups will be the free product $\Z/2\Z * \Z/2\Z$. When working with this group, we will always use $a$ and $b$ to represent the natural generators of it. That is,
\begin{equation}
\Z/2\Z * \Z/2\Z = \langle a,b \mid a^2=b^2=1 \rangle.
\end{equation}
Let $\Gamma_k' = (\Z,\{\pm 2, \pm 3, \ldots, \pm 2k - 3\})$, and let $\Delta_k' = (\Z/2\Z * \Z/2\Z, \{ab,ba,$ $aba,bab,\ldots,(ab)^{k-2}a,(ba)^{k-2}b \} )$. The Cayley graphs of $\Gamma_k'$ and $\Delta_k'$ are clearly isomorphic.

Let us start by examining an arbitrary coloring $c$ of $\cay(\Gamma_k')$. There must be some $n \in \Z$ such that $c(n) \neq c(n+1)$. Then each integer $n+1 \leq m \leq 2k-3$ must have a different color from $n$. Furthermore, no more than two integers in this interval can share the same color, and this can occur only if their positive difference is 1. There are $2k-3$ integers in this interval, so this interval must use at least $k-1$ colors, all different from $c(n)$. Therefore $\chi(\cay(\Gamma_k')) = \chi(\cay(\Delta_k')) \geq k$. From Theorem \ref{th:ab2} it will follow that the chromatic numbers are exactly $k$.

Suppose now that $c$ is a $k$-coloring using the colors $1,2,\ldots,k$, and that $c(n) = 1$. Then our interval above must use all of the colors besides 1, and it must use all but one of them twice. Define a function $\delta:\Z \rightarrow \Z/k\Z$ by $\delta(m) = c(m') - c(m)$ where $m' \in \Z$ is the least integer greater than $m$ with $c(m') \neq c(m)$ (so $m' = m+1$ or $m+2$). Clearly, we can relabel colors such that $\delta(m) = 1$ for all $n \leq m \leq 2k-3$. A simple inductive argument then shows that $\delta(m) = 1$ for all $m \in \Z$.

We may now again compute some chromatic numbers:

\begin{theorem}\label{th:ab2}
$\chi_B(F(2^{\Gamma_k'})) = k$.
\end{theorem}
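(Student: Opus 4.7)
The lower bound $\chi_B(F(2^{\Gamma_k'})) \geq \chi(\cay(\Gamma_k')) \geq k$ is immediate from the discussion preceding the theorem, so the task is to produce a Borel $k$-coloring of $F(2^{\Gamma_k'})$.

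My plan is first to refine the ``$\delta \equiv 1$'' analysis into a clean criterion on the run structure: by directly computing the range of realizable distances between two positions lying in runs exactly $k$ apart, one sees that a run-pattern coloring (runs of length $1$ or $2$, colors cycling through $1, 2, \ldots, k$) is a proper coloring of $\cay(\Gamma_k')$ precisely when \emph{no window of $k - 1$ consecutive runs contains more than one run of length $1$}. In particular the all-length-$2$ pattern works, so this criterion is nonvacuous.

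For the Borel construction, I apply Lemma \ref{discrete} to the Borel bounded-degree graph $H$ on $F(2^\Z)$ induced by the standard $\{\pm 1\}$ generators of $\Z$, obtaining a maximal Borel $(N, H)$-discrete set $A \subseteq F(2^\Z)$ for a sufficiently large constant $N = N(k)$ (one can take $N$ of order $k^2$). Each $\Z$-orbit is canonically a copy of $\Z$, and $A$ selects checkpoints $\ldots < a_{-1} < a_0 < a_1 < \ldots$ on it with gaps $L_i := a_{i+1} - a_i > N$. On the half-open segment $[a_i, a_{i+1})$ I define the coloring by a run pattern that starts at color $1$ at $a_i$ and consists of exactly $j_i := k \lceil L_i/(2k) \rceil$ runs; since $j_i \equiv 0 \pmod k$, the next segment resumes cleanly at color $1$ at $a_{i+1}$. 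The $q_i := 2 j_i - L_i \in [0, 2k)$ length-$1$ runs are placed deep in the middle of the segment, pairwise separated by at least $k - 1$ length-$2$ runs and at distance at least $k - 1$ runs from each endpoint.

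The main obstacle I expect is the arithmetic verification that an $N$ of order $k^2$ truly provides enough room to realize this placement in every segment, since the ratio of length-$1$ to length-$2$ runs depends on $L_i \pmod{2k}$ and segments have varying lengths. Once this is checked, the map is Borel (it is a fixed function of the Borel set $A$ together with $L_i$ and position within the segment), and the criterion from the first paragraph holds inside each segment by construction, as well as across segment boundaries automatically, because the last and first $k - 1$ runs of any two adjacent segments are all of length $2$, so no window of $k - 1$ consecutive runs straddling a segment boundary contains any length-$1$ run at all. This yields $\chi_B(F(2^{\Gamma_k'})) \leq k$ and hence the theorem.
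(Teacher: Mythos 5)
Your proposal is correct and follows essentially the same route as the paper: a maximal Borel $\rho_{F(2^\Z)}$-discrete set of checkpoints obtained from Lemma \ref{discrete}, followed by a run-pattern coloring with runs of length $1$ or $2$ cycling through the $k$ colors, where a bounded number (less than $2k$) of well-separated length-$1$ runs absorbs the residue of the gap length modulo $2k$. The paper concentrates its length-$1$ runs (all of color $1$, spaced $2k-1$ apart) at the start of each segment rather than in the middle, but this is a cosmetic difference, and your explicit ``at most one length-$1$ run per window of $k-1$ consecutive runs'' criterion is a correct and slightly sharper formulation of the distance check the paper performs implicitly.
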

\begin{proof}
Using Lemma \ref{discrete}, let $A \subset F(2^{\Gamma_k'})$ be a maximal Borel $((2k-1)^2,F(2^\Z))$-discrete set where $\Z$ refers to the marked group $(\Z,\{\pm 1\})$. Give each point in $A$ the color 1. We now color the points in between those of $A$.

Fix an $x \in A$, and let $N$ be the smallest positive integer such that $N \cdot x \in A$. Let $0 \leq l < 2k$ such that $N = -l$ (mod $2k$). Now, for each $0 \leq n \leq l$, give $(2k-1)n \cdot x$ the color 1. Let $M = (2k-1)l$. Note that $N - M = 0$ (mod $2k$) and $M \leq N$. Now for each $0 \leq n < (M-N)/2k$, give both $(2kn+M) \cdot x$ and $(2kn+1+M) \cdot x$ the color 1.

Let $A'$ be the set of everything we have colored 1 so far. Note that if $y \in A'$ is such that $1 \cdot y \not\in A'$ and $N$ is the smallest positive integer such that $N \cdot y \in A'$, then $N = 2k-1$. Now, for the rest of the points $y \in F(2^{\Gamma_k'}) \setminus A$, if $N$ is the smallest positive integer such that $(-N) \cdot y \in A'$, give $y$ the color $\lceil N/2 \rceil+1$. It is clear that the smallest possible $\rho_{F(2^\Z)}$ distance between two points of the same color (besides the distance 1) is $2k-2$, so this constitutes a Borel $k$-coloring.
\end{proof}

Note the crucial role played by the natural Borel $\Z$-ordering on the connected components of $F(2^\Z)$ given by $x < y$ if and only if there exists $N > 0$ such that $N \cdot x = y$. It allowed us to count `up' from one element of $A$ to the next. Our proof that $\chi_B(F(2^{\Delta_k'})) > k$ will rely on the following fact from \cite{KST} (see page 19), which makes precise the idea that there is no such way of orienting ourselves when working with $F(2^{\Z/2\Z * \Z/2\Z})$.

\begin{lemma}\label{order}
There is no Borel relation $<$ on $F(2^{\Z/2\Z * \Z/2\Z})$ such that for each connected component $E$ of $F(2^{\Z/2\Z * \Z/2\Z})$, the restriction of $<$ to $E$ is a linear order of type $\Z$ such that for each $x \in E$, the successor and predecessor of $x$ under this order are (not necessarily respectively) $a \cdot x$ and $b \cdot x$.
\end{lemma}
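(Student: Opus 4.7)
The plan is to reduce the statement directly to Proposition \ref{prop:2}. Specifically, from any Borel order $<$ of the type described, I would extract a Borel 2-coloring of the shift graph $F(2^{\Z/2\Z * \Z/2\Z})$, contradicting Proposition \ref{prop:2} since $\Z/2\Z * \Z/2\Z$ is infinite.

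Concretely, first I would define $f:F(2^{\Z/2\Z * \Z/2\Z}) \to \{1,2\}$ by setting $f(x)=1$ if the $<$-successor of $x$ is $a\cdot x$, and $f(x)=2$ if it is $b\cdot x$. Because $x$ lies in the free part, the two neighbors $a\cdot x$ and $b\cdot x$ are distinct, and the hypothesis on $<$ says the successor of $x$ in its component is one of these two. So $f$ is well defined, and it is Borel whenever $<$ is.

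The next step is to check that $f$ is a graph coloring, i.e.\ $f(s\cdot x)\neq f(x)$ for $s\in\{a,b\}$. This is a short case analysis. Suppose $f(x)=1$, so $a\cdot x$ is the successor of $x$. Then $x=a\cdot(a\cdot x)$ is the predecessor of $a\cdot x$, so the successor of $a\cdot x$ must be its other neighbor $b\cdot(a\cdot x)$; hence $f(a\cdot x)=2$. Also, $b\cdot x$ must be the predecessor of $x$ (the remaining neighbor), so $x=b\cdot(b\cdot x)$ is the successor of $b\cdot x$; hence $f(b\cdot x)=2$. The case $f(x)=2$ is handled symmetrically. Thus $f$ flips on every edge, so it is a Borel 2-coloring.

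Applying Proposition \ref{prop:2} to the infinite marked group $(\Z/2\Z * \Z/2\Z,\{a,b\})$ then yields the desired contradiction. The only real content is the case-check showing that "which generator points forward in the order" is an odd function along each line; there is no serious obstacle beyond being careful about which neighbor plays the role of successor versus predecessor.
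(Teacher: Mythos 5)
Your proof is correct: the function $f(x)=1$ iff $x< a\cdot x$ is Borel, and the case analysis showing $f$ flips across every edge is right (using $a^2=b^2=\id$ and the fact that $\{a\cdot x, b\cdot x\}$ is exactly the successor/predecessor pair, with $a\cdot x\neq b\cdot x$ on the free part). However, it takes a different route from the paper, which does not prove Lemma \ref{order} at all but simply cites it as a known fact from \cite{KST} (page 19). Your reduction to Proposition \ref{prop:2} makes the lemma self-contained within the paper at essentially no cost: the only external input is the topological zero--one law already used there, so the underlying obstruction (generic ergodicity of the shift) is the same as in the standard argument, but packaged through a result the paper has already established. A further benefit is that the paper later asserts, without proof, that the Baire measurable and $\mu$-measurable analogues of Lemma \ref{order} ``can easily be checked''; your reduction delivers these immediately from the measurable and Baire measurable versions of Proposition \ref{prop:2} proved in Section \ref{sec:probs}, since the 2-coloring $f$ you extract is measurable whenever $<$ is. One small presentational point: it is worth stating explicitly that ``$a\cdot x$ is the successor of $x$'' is equivalent to the manifestly Borel condition ``$x< a\cdot x$'' (given the hypothesis that the successor and predecessor are exactly the two neighbors), which is what makes the Borelness of $f$ immediate.
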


If $\prec$ is a $\Z$-ordering of some connected component $E$ with the property described above, call it a \textit{natural} $\Z$-ordering of $E$. Note that every $E$ has exactly two choices of natural orderings. If $\prec$ is one, call the other $\prec\inv$. It is of course given by $x \prec\inv y$ if and only if $y \prec x$.

\begin{theorem}\label{th:non2}
$\chi_B(F(2^{\Delta_k'})) = k+1$
\end{theorem}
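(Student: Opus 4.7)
The plan is to mirror Theorem~\ref{th:non}: derive $\chi_B > k$ from Lemma~\ref{order} via a $\delta$-based construction, and obtain $\chi_B \le k+1$ via a sparse maximal independent set together with a buffer zone.

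For the lower bound, suppose $c: F(2^{\Delta_k'}) \to \{1,\dots,k\}$ is a Borel $k$-coloring. For each $x$, define
\begin{equation}
\delta_a(x) = c(y) - c(x) \in \Z/k\Z,
\end{equation}
where $y$ is the first term in the sequence $a \cdot x, ba \cdot x, aba \cdot x, \dots$ with $c(y) \neq c(x)$, and define $\delta_b(x)$ symmetrically starting with $b$. Both are well-defined and Borel: within the first $2k-3$ terms we encounter a graph-neighbor of $x$, whose color must differ. For any component $E$ and any natural $\Z$-ordering $\prec$ of $E$, the forward sequence under $\prec$ starting at $x$ alternates left-multiplication by $a$ and $b$, so it coincides with either the $\delta_a$- or the $\delta_b$-sequence; the analysis preceding Theorem~\ref{th:ab2} shows that along this forward sequence $\delta$ is constantly some $d \in \Z/k\Z$ that must be a generator (since each component of the shift graph uses all $k$ colors). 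Reversing $\prec$ negates $d$, so at every $x \in E$ we have $\{\delta_a(x), \delta_b(x)\} = \{d(E), -d(E)\}$, a well-defined unordered pair in $(\Z/k\Z)^\times$.

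Because $k \ge 3$ and $\gcd(d(E),k)=1$, the involution $g \mapsto -g$ on $(\Z/k\Z)^\times$ is fixed-point-free; fix $H \subset (\Z/k\Z)^\times$ with $H \sqcup (-H) = (\Z/k\Z)^\times$. Set $\mathrm{next}(x) = a \cdot x$ if $\delta_a(x) \in H$, and $\mathrm{next}(x) = b \cdot x$ otherwise. This is Borel, and on each component $E$ it coincides with the successor function of the unique natural $\Z$-ordering on $E$ whose forward $\delta$-value lies in $H$ (the check uses precisely the alternation of $a,b$ in both the natural ordering and in the $\delta_a, \delta_b$ sequences). Thus a natural $\Z$-ordering on each component of $F(2^{\Delta_k'})$ is Borel-definable, contradicting Lemma~\ref{order}.

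For the upper bound, apply Lemma~\ref{discrete} to obtain a maximal Borel $N$-discrete set $A \subset F(2^{\Delta_k'})$ for $N$ large enough to accommodate a buffer zone (e.g., $N \ge 3(2k-3)$). Color each element of $A$ with color $1$ and propagate the standard constant-$\delta$ $k$-coloring outward from each $A$-point; where two propagations meet inside an arc, reconcile them using color $k+1$ in a controlled buffer region, in the spirit of the swap technique from the proof of Theorem~\ref{th:non}. Since the canonical $k$-coloring pattern is invariant under reversal of the natural ordering, no Borel choice of direction is required. The principal obstacle is pinning down the identification $\{\delta_a(x), \delta_b(x)\} = \{d(E), -d(E)\}$ uniformly on each component, i.e.\ correctly matching the $a,b$-alternation of the $\delta_a, \delta_b$ sequences against the alternation built into any natural ordering; once this is in hand, the contradiction with Lemma~\ref{order} is immediate and the upper bound is a direct adaptation of the coloring scheme of Theorem~\ref{th:non}.
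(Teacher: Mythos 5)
Your overall strategy (orient each line by breaking the symmetry between a coloring invariant and its ``reverse,'' then contradict Lemma~\ref{order}; build the $(k{+}1)$-coloring from a sparse discrete set plus a buffered swap) is the paper's strategy, but the key step of your lower bound is false as stated. You claim that along the forward sequence the color difference $\delta$ is constantly some $d\in\Z/k\Z$, necessarily a unit, so that $\{\delta_a(x),\delta_b(x)\}=\{d(E),-d(E)\}\subset(\Z/k\Z)^\times$. The analysis preceding Theorem~\ref{th:ab2} only shows that $\delta\equiv 1$ \emph{after a relabeling of the colors}; intrinsically, what is constant along a component is the successor-color permutation $\tau\in S_k$ (a $k$-cycle), and $\delta(x)=\tau(c(x))-c(x)$ need not be independent of $c(x)$, need not be a unit, and the forward/backward pair need not be of the form $\{d,-d\}$. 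Concretely, for $k=4$ the periodic coloring $1,1,3,3,2,2,4,4,\dots$ is a valid coloring of $\cay(\Gamma_4')$ with $\tau=(1\,3\,2\,4)$; at a point colored $1$ the forward and backward differences are $2$ and $3$, at a point colored $2$ they are $2$ and $1$, so your rule ``$\mathrm{next}(x)=a\cdot x$ iff $\delta_a(x)\in H$'' is neither well defined (the value $2$ is not a unit mod $4$) nor consistent across points of one component. The repair is exactly what the paper does: take the invariant to be the whole $k$-cycle $\tau_{E,\prec}$, observe $\tau_{E,\prec\inv}=\tau_{E,\prec}\inv$, and split the set of $k$-cycles as $C_1\sqcup C_1\inv$ (possible since no $k$-cycle is its own inverse when $k\geq 3$); choosing the ordering whose cycle lies in $C_1$ then Borel-defines a natural $\Z$-ordering, contradicting Lemma~\ref{order}.

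Your upper bound also leans on a false claim: that the canonical $k$-coloring pattern is invariant under reversal, so that no Borel choice of direction is needed. Any orientation-independent pattern centered at a point of $A$ would be palindromic, hence would give the two points at $\{a,b\}$-distance $1$ on either side of the center the same color; those two points are at mutual distance $2$, which is a generator of $\Delta_k'$, so they are adjacent and this is not a proper coloring. The periodic pattern $1,2,2,3,3,\dots,k,k$ and its reversal are genuinely different colorings (related only by a color permutation), so the construction really does require a Borel choice of orientation near each point of $A$ (the paper uses $\prec_x$ determined by $x\prec_x a\cdot x$) and, for each gap whose two endpoint orientations disagree, a Borel selection of one endpoint via Lemma~\ref{choice} (Lusin--Novikov) before running the swap procedure. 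With those two ingredients restored, your outline matches the paper's proof.
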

\begin{proof}
We start by showing the Borel chromatic number must be greater than $k$. Suppose, to the contrary, we have a Borel $k$-coloring $c: F(2^{\Delta_k'}) \rightarrow \{1,2,\ldots,k\}$. Let $C$ be the set of $k$-cycles in $S_k$, and let $C = C_1 \sqcup C_2$ be a partition as in the proof of Theorem \ref{th:non}. 

Let $E$ be a connected component of our shift graph. If we have a natural $\Z$-ordering $\prec$ on $E$, then by the remarks preceding Theorem \ref{th:ab2} the function
\begin{equation}
\begin{split}
i \mapsto c(x') \textnormal{ such that } x \in E \textnormal{ with } c(x) = i \ \ \ \ \ \\ \textnormal{ and } x' \textnormal{ is the least point } \succ x \textnormal{ with } c(x') \neq c(x) 
\end{split}
\end{equation}
is well defined and defines a $k$-cycle in $S_k$, call it $\tau_{E,\prec}$. Note that $\tau_{E,\prec\inv} = \tau_{E,\prec}\inv$.

By Lemma \ref{order}, we cannot choose a natural $\Z$-ordering for each $E$ in a Borel way, but we can still determine the set $\{\tau_{E,\prec},\tau_{E,\prec\inv}\}$. By always choosing the representative of this set in $C_1$, we get a Borel $\Delta_k'$-invariant map $f:F(2^{\Delta_k'}) \rightarrow C_1$ such that $f(E) \subset \{\tau_{E,\prec},\tau_{E,\prec\inv}\}$ for each $E$. This allows us to contradict Lemma \ref{order}, though: From $f$, we can define a Borel relation $<$ contradicting the lemma by setting $x < y$ if and only if $x$ and $y$ are in the same connected component $E$, $x \neq y$, and $\tau_{E,\prec} = f(E)$ for the unique choice of $\prec$ satisfying $x \prec y$.

We next construct a Borel $(k+1)$ coloring, $c$, using the colors $1,2,\ldots,k+1$. As in the proof of Theorem \ref{th:ab2}, let $A \subset F(2^{\Delta_k'})$ be a maximal Borel $(5k^2,F(2^{\Z/2\Z * \Z/2\Z}))$-discrete set where $\Z/2\Z * \Z/2\Z$ refers to the marked group $(\Z/2\Z * \Z/2\Z,\{a,b\})$. First give each point of $A$ the color 1. Then color the points around each point of $A$ as follows: Let $x \in A$, and let $E$ be the connected component of $x$. Let $\prec_x$ be the natural $\Z$-ordering on $E$ satisfying $x \prec_x a \cdot x$. Every $\Z$-ordering $\prec$ on $E$ induces a $\Z$-action, call it $\cdot_{\prec}$, on $E$, determined by $1 \cdot_{\prec} x = y$ such $y$ is the successor of $x$ under $\prec$. For each $0 < n \leq 2(k-1)$, give $n \cdot_{\prec_x} x$ the color $\lceil n/2 \rceil + 1$ and $(-n) \cdot_{\prec_x} x$ the color $k+2-c(n \cdot_{\prec_x} x)$. We now color the points in between these clusters.

\begin{figure}
\centering
\includegraphics[width=1\textwidth]{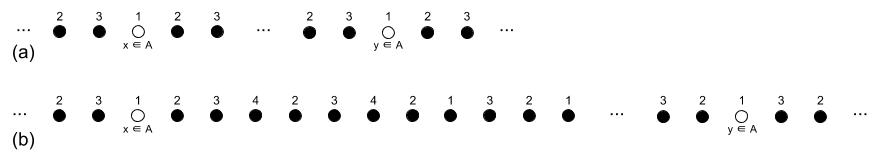}
\caption{\label{fig:lin} A drawing of the coloring procedure for $\Delta_3'$. Points are arranged in a line on the page according to some natural $\Z$-order for their connected component. Hollow points represent a single point, while filled in ones represent two adjacent (in the $\Z$-order) ones having the same color. The numbers over the points denote the colors. $x$ and $y$ are as in the proof of the Theorem. (a): The case where $\prec_x = \prec_y$. (b): The case where $\prec_x \neq \prec_y$.}
\end{figure}

Let $x,y \in A$ be distinct points in the same connected component $E$ such that for any natural $\Z$-ordering on $E$, there are no points in between $x$ and $y$. Suppose we have $\prec_x = \prec_y$. This situation is depicted in part (a) of Figure \ref{fig:lin}. Then, following the technique of the proof of Theorem \ref{th:ab2}, we can use the colors $1$ through $k$ to color all the points represented by the middle elipses in the Figure. Note that $x$ and $y$ are far enough apart to allow for this.

The difficulty occurs when $\prec_x \neq \prec_y$, as depicted in part (b) of the Figure. Fix for now a natural $\Z$-ordering, $\prec$, on $E$. Using $\prec$, we can specify the following procedure for coloring the points between $x$ and $y$: Suppose, without loss of generality, $x \prec y$. Also assume $\prec = \prec_x$: If not, the procedure is essentially identical. Let $N = 6k \lfloor k/2 \rfloor$. We start by coloring the points $n \cdot_{\prec} x$ for $n$ even and $0 \leq n < N+2k$. Given such an $n$, consider the points $n \cdot_\prec x, (n+2) \cdot_\prec x, \ldots, (n+2(k-1)) \cdot_\prec x$, and let $\sigma_n \in \{1,2,\ldots,k+1\}^k$ be the sequence of colors they take respectively. For example, $\sigma_0 = (1,2,3,\ldots,k)$.

Our goal is to arrange things so that $\sigma_{6k} = (k,2,3,\ldots,k-1,1)$, $\sigma_{12k} = (k,k-1,3,\ldots,k-2,2,1)$, and so on to $\sigma_{N} = (k,\ldots,2,1)$. Note that our desired $\sigma_{6km}$ and $\sigma_{6k(m+1)}$ differ by a single transposition. Therefore we can use the procedure seen in part (b) of Figure \ref{fig:lin}, which mirrors that seen in Figure \ref{fig:swap}: Given $\sigma_{6km}$ and the transposition $(c\ d)$ relating $\sigma_{6km}$ and our goal for $\sigma_{6k(m+1)}$, where $c < d$, color the succeeding points such that $\sigma_{6km + 2k} = \sigma_{6km}$ but with $c$ and $k+1$ switched, then $\sigma_{6km + 4k} = \sigma_{6km + 2k}$ but with $d$ and $c$ switched, and $\sigma_{6km + 6k} = \sigma_{6km + 4k}$ but with $k+1$ and $d$ switched.

Next, for $0 < n < N + 2k$ with $n$ odd, simply set $c(n \cdot_\prec x) = c((n+1) \cdot_\prec x)$. When all this is done, we will be `aligned' with the coloring we've established around $y$, and can again use the technique from the proof of Theorem $\ref{th:ab2}$ to color in the points between $(N+2k) \cdot_\prec x$ and $y$ using the colors $1$ through $k$.

Because of Lemma \ref{order}, it may seem that the dependence of the above procedure on a choice of $\prec$ renders it useless. However, this is not so since we have the ability to choose a different $\prec$ for each pair of points $x,y$ as above with $\prec_x \neq \prec_y$. To do this in a Borel fashion, it is enough to pick out one of $x$ or $y$ for every such pair in a Borel fashion, since then we can set $\prec = \prec_x$ or $\prec_y$ according to which one was picked. Thus, thanks to the following Lemma, we are done:
\end{proof}

\begin{lemma}\label{choice}
Let $X$ be a standard Borel space, and $[X]^{<\infty}$ the standard Borel space of finite subsets of $X$. There is a Borel choice function $p:[X]^{<\infty} \setminus \{\emptyset\} \rightarrow X$.
\end{lemma}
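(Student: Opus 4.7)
The plan is to equip $X$ with a Borel linear order and then define $p$ by sending each nonempty finite set to its minimum under this order. To produce the order, I would invoke Kuratowski's theorem: every standard Borel space is Borel isomorphic to a Borel subset of $\R$, so I can fix such an isomorphism $\iota: X \to \R$ and declare $x < y$ if and only if $\iota(x) < \iota(y)$. This yields a Borel linear order on $X$.

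Next, I would decompose $[X]^{<\infty} \setminus \{\0\}$ as the disjoint union $\bigsqcup_{n \geq 1} [X]^n$ of the standard Borel spaces of $n$-element subsets, and verify that $p$ is Borel on each piece separately. For fixed $n$, the map sending a strictly increasing tuple $(x_1, \ldots, x_n) \in X^n$ (with respect to $<$) to the set $\{x_1, \ldots, x_n\}$ is a Borel bijection from the Borel set of such tuples onto $[X]^n$, with Borel inverse given by enumerating each finite set in increasing order. Under this identification, $p$ restricted to $[X]^n$ becomes projection onto the first coordinate, which is manifestly Borel.

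The only real obstacle is checking that the standard Borel structure on $[X]^n$ coincides with the one induced by the strictly increasing tuples. This is routine, but the exact argument depends on which definition of $[X]^{<\infty}$ one adopts (for instance, the Effros Borel structure on closed subsets of a Polish realization of $X$, or the quotient of $\bigsqcup_n X^n$ by the symmetric group action). Once this identification is in place, Borel-ness of $p$ follows immediately from the description above.
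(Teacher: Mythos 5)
Your proof is correct, but it takes a genuinely different route from the paper. The paper's argument is a one-liner: let $P \subset [X]^{<\infty} \times X$ be the membership relation $\{(S,x) \mid x \in S\}$; since $P$ is Borel with countable (indeed finite) nonempty sections over $[X]^{<\infty} \setminus \{\0\}$, the Lusin--Novikov uniformization theorem immediately yields a Borel function selecting an element from each set. You instead construct the selector by hand: transport a Borel linear order from $\R$ via a Borel embedding of $X$, and send each finite set to its minimum, verifying Borelness piecewise on each $[X]^n$ through the increasing-tuple parametrization. Both arguments are sound. Yours is more elementary in that it avoids invoking uniformization machinery, and it produces a canonical selector (in fact a full Borel enumeration of each finite set in increasing order, which is more than the lemma asks for); the cost is precisely the bookkeeping you flag at the end, namely matching the Borel structure on $[X]^n$ with the increasing-tuple model, which is routine but does depend on fixing a definition of $[X]^{<\infty}$. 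The paper's approach sidesteps that entirely and also generalizes without change to Borel relations with countably infinite sections, where a minimum need not exist.
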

\begin{proof}
Let $P \subset [X]^{<\infty} \times X$ be the set of pairs $(S,x)$ such that $x \in S$. $P$ is Borel, so the result follows immediately from the Lusin-Novikov Uniformization Theorem (see, for example, Theorem 18.10 in \cite{K95}.)
\end{proof}

%%%%%%%%%%%%%%%%%%%%%%%%%%%%%%%%%%%%%%%%%%%%%%%%%%%%%%%%%%%%%%%%%%%%%%%%%%%%%%%%%%%%%%%%%%%%%%%%%%%%%%%%%%%%%%%%%

\section{Borel Isomorphic Equivalence Relations}\label{sec:iso}

%%%%%%%%%%%%%%%%%%%%%%%%%%%%%%%%%%%%%%%%%%%%%%%%%%%%%%%%%%%%%%%%%%%%%%%%%%%%%%%%%%%%%%%%%%%%%%%%%%%%%%%%%%%%%%%%%

Let $X$ be a standard Borel space and $E$ an equivalence relation on $X$. We call $E$ \textit{Borel} if it is Borel as a subset of $X \times X$. We will be particularly interested in the case where $E$ is the equivalence relation generated by some Borel graph $G$, so that the equivalence classes of $E$ are by definition the connected components of $G$. If $F$ is another Borel equivalence relation on another standard Borel space $Y$, we say $E$ and $F$ are \textit{Borel isomorphic} if there is a Borel bijection $f:X \rightarrow Y$ such that $(x,y) \in E$ if and only if $(f(x),f(y)) \in F$. In this section we prove our claim from Section \ref{sec:intro} that for each of our pairs $\Gamma_k$ and $\Delta_k$, the equivalence relations generated by $F(2^{\Gamma_k})$ and $F(2^{\Delta_k})$ are Borel isomorphic.

Our proof will depend on Theorem 2' from \cite{DJK}, which will require a few definitions to state. 

A relation $E$ as above is called \textit{smooth} if there is a Borel subset $A \subset X$ meeting each $E$ equivalence class exactly once. It is called \textit{aperiodic} if it its equivalence classes are all infinite.

Suppose a relation $E$ as above is induced by a Borel action of a countable group $\Gamma$ on $X$. Let $\mu$ be a Borel probability measure on $X$. We call $\mu$ \textit{$E$-invariant} if it $\Gamma$-invariant, that is, if $\mu(g \cdot A) = \mu(A)$ for any $g \in \Gamma$ and $\mu$-measurable $A \subset X$. This definition does not depend on the choice of action or group. We call $\mu$ \textit{$E$-ergodic} if every $E$-invariant Borel set has measure 0 or 1. Finally, a Borel equivalence relation $E$ is called \textit{hyperfinite} if it can be induced by a Borel action of $\Z$. A Borel action of $\Gamma$ on $X$ as above is called hyperfinite if the equivalence relation it induces is hyperfinite.

We can now state Theorem 2' from \cite{DJK}:
\begin{theorem}\label{th:iso}
For a Borel equivalence relation $E$ on a standard Borel space $X$, let $C(E)$ denote the number of $E$-invariant, $E$-ergodic Borel probability measures on $X$. Two aperiodic, non smooth hyperfintie Borel equivalence relations $E$ and $F$ are Borel isomorphic if and only if $C(E) = C(F)$.
\end{theorem}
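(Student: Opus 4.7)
The plan is to split the biconditional into two directions, with the real content concentrated in the reverse implication. For the forward direction, a Borel isomorphism $f : X \to Y$ between $E$ and $F$ induces by pushforward a bijection $\mu \mapsto f_*\mu$ from $E$-invariant Borel probability measures on $X$ to $F$-invariant ones on $Y$. Because $f$ and $f\inv$ are both Borel, this bijection preserves ergodicity: an $F$-invariant Borel set in $Y$ pulls back to an $E$-invariant Borel set in $X$ and vice versa, so the counts $C(E)$ and $C(F)$ agree.

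For the converse, assume $C(E) = C(F)$. The strategy is to decompose $X$ into a ``measure-preserving'' part $X_0$, defined as the saturation of the union of the supports of all $E$-invariant Borel probability measures, and a ``compressible'' part $X_1 = X \setminus X_0$, and similarly for $Y$, then match these up separately. On $X_0$ a Borel ergodic decomposition partitions the space in an invariant way indexed by the $E$-invariant, $E$-ergodic Borel probability measures, and the hypothesis $C(E) = C(F)$ supplies a bijection between these indexing sets on the two sides. On $X_1$, by construction the restricted relation admits no invariant Borel probability measure.

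The heart of the proof is then to invoke two classification theorems for aperiodic, non-smooth hyperfinite Borel equivalence relations: first, that any two such relations admitting \emph{no} invariant Borel probability measure are Borel isomorphic (a ``universality'' result proved via marker-lemma compression arguments applied to a presenting increasing sequence of finite Borel subequivalence relations); second, a Borel version of Dye's theorem asserting that any two aperiodic hyperfinite Borel relations carrying invariant ergodic Borel probability measures are, on suitable conull invariant Borel sets, measure-preservingly Borel isomorphic. Applying the first theorem to $(X_1,E)$ and $(Y_1,F)$, and the second to each matched pair of ergodic components inside $X_0$ and $Y_0$, yields component-wise isomorphisms that can be glued into a single Borel bijection $X \to Y$.

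The main obstacle is the universality statement for the compressible case. Showing that the mere absence of an invariant probability measure forces concrete Borel compressibility (as opposed to merely ruling out probability measures), and then using that compressibility to run a back-and-forth construction in a fully Borel fashion, is the deep technical core. A secondary subtlety is gluing: the conull sets discarded by the Dye-style ingredient on each ergodic component must be absorbed using the infinite ``room'' provided by the compressible parts, and one must ensure that the resulting bijection is Borel rather than merely measurable, including across the continuum of ergodic components, where a uniform Borel selection of the component-wise isomorphisms is required.
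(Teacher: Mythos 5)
The paper does not actually prove this statement: it is imported verbatim as Theorem 2$'$ of \cite{DJK} and used as a black box. Your outline correctly reconstructs the strategy of the proof in that reference---pushforward of measures for the forward direction, and for the converse the ergodic decomposition, a Nadkarni-type equivalence of ``no invariant probability measure'' with compressibility, a Borel version of Dye's theorem on the ergodic pieces, and the Borel isomorphism of all compressible aperiodic non-smooth hyperfinite relations, glued along a Borel bijection of the spaces of ergodic invariant measures---so it is essentially the same approach as the cited source.
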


The following two Lemmas pointed out to us by A.S. Kechris will allow us to apply this theorem to our situation:

\begin{lemma}\label{lem:virtual}
Let $\Gamma$ be a virtually infinite cyclic group, that is, a group with a finite index subgroup isomorphic to $\Z$. Then every Borel action of $\Gamma$ is hyperfinite
\end{lemma}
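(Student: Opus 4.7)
The plan is to restrict to the copy of $\Z$ inside $\Gamma$ and then promote hyperfiniteness back up using a finite-index extension argument. Fix a Borel action of $\Gamma$ on a standard Borel space $X$, a finite-index subgroup $H \leq \Gamma$ isomorphic to $\Z$, and a transversal $g_1, \ldots, g_n$ for $\Gamma/H$, where $n = [\Gamma:H]$.

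Let $E_\Gamma$ and $E_H$ denote the orbit equivalence relations of the $\Gamma$-action and its restriction to $H$, respectively. Both are Borel since the acting groups are countable and the action map is Borel, and clearly $E_H \subset E_\Gamma$. Since $H \cong \Z$, the restricted action is a Borel $\Z$-action on $X$, so $E_H$ is hyperfinite directly from the definition given in this section. Moreover, for each $x \in X$ we have the decomposition
\begin{equation}
\Gamma \cdot x = \bigcup_{i=1}^n g_i \cdot (H \cdot x),
\end{equation}
so every $E_\Gamma$-class is the union of at most $n$ many $E_H$-classes.

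It now suffices to invoke the standard fact that a Borel equivalence relation which is a finite-index extension of a hyperfinite one is itself hyperfinite; this is established in \cite{DJK}. Applied to the pair $E_H \subset E_\Gamma$, it gives that $E_\Gamma$ is hyperfinite, as required. The only real obstacle is having this finite-index extension result on hand as a black box; once that is granted, the remainder of the argument is a routine combination of definitions and the coset decomposition above.
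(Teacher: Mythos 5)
Your argument is correct and is essentially the paper's own proof: restrict to the $\Z$ subgroup, observe each $\Gamma$-class contains at most $n = [\Gamma:H]$ many $H$-classes, and cite the fact that finite-index Borel extensions of hyperfinite equivalence relations are hyperfinite (the paper uses Proposition 1.3(vii) of \cite{JKL} for this black box rather than \cite{DJK}, but that is immaterial). No gaps.
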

\begin{proof}
Fix a Borel action of $\Gamma$ on some standard Borel space $X$. Let $E$ be the equivalence relation induced by the action of $\Gamma$ and $F$ the equivalence relation induced by the action of the finite index subgroup isomorphic to $\Z$. Let the index be $n$. Now, $F$ is hyperfinite by definition, $F \subset E$, and each $E$ equivalence class contains at most $n$ $F$ equivalence classes. Proposition 1.3(vii) in \cite{JKL} then tells us $E$ is hyperfinite.
\end{proof}

\begin{lemma}\label{lem:measure}
Let $\Gamma$ be a marked group. Let $E$ be the connected component equivalence relation for $F(2^\Gamma)$. Then $C(E) = 2^{\aleph_0}$.
\end{lemma}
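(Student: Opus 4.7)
The plan is to exhibit continuum many $E$-invariant, $E$-ergodic Borel probability measures on $F(2^\Gamma)$; combined with the automatic bound $C(E) \leq 2^{\aleph_0}$ (the cardinality of all Borel probability measures on a standard Borel space), this gives equality. The natural candidates are the Bernoulli measures: for each $p \in (0,1)$, let $\mu_p$ be the product measure on $2^\Gamma$ assigning each coordinate the value $1$ with probability $p$ and $0$ with probability $1-p$.

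First I would check that each $\mu_p$ concentrates on $F(2^\Gamma)$. We may assume $\Gamma$ is infinite, since this is the only case relevant to the application (for finite $\Gamma$ the conclusion fails but the lemma is not invoked). For each $g \in \Gamma \setminus \{\id\}$, the fixed-point set $\{x \in 2^\Gamma : g \cdot x = x\}$ consists of those $x$ which are constant on every $\langle g \rangle$-orbit in $\Gamma$. Since $\Gamma$ is infinite, there are infinitely many such orbits, and on each the probability of being constant is strictly less than $1$, so independence gives $\mu_p$-measure zero. A countable union over $g$ yields $\mu_p(F(2^\Gamma)) = 1$.

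Next, $\mu_p$ is $\Gamma$-invariant by construction, and by the classical fact that Bernoulli shifts of infinite countable groups are ergodic, $\mu_p$ is $\Gamma$-ergodic as well. Because the shift action of $\Gamma$ on $F(2^\Gamma)$ is free, a Borel set is $\Gamma$-invariant modulo $\mu_p$ if and only if it is $E$-invariant modulo $\mu_p$, and a measure is $\Gamma$-invariant if and only if it is $E$-invariant (in the sense of invariance under all partial Borel automorphisms with graph in $E$). Hence the restriction of $\mu_p$ to $F(2^\Gamma)$ is an $E$-invariant, $E$-ergodic Borel probability measure.

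Finally, distinct values of $p$ give distinct measures because $\mu_p(\{x : x(\id) = 1\}) = p$. Thus we obtain $|(0,1)| = 2^{\aleph_0}$ such measures, so $C(E) \geq 2^{\aleph_0}$, and the proof is complete. The only nonroutine input is the ergodicity of Bernoulli shifts for infinite countable groups, so there is no substantive obstacle.
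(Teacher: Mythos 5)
Your proposal is correct and follows essentially the same route as the paper: both exhibit the continuum of Bernoulli product measures $\mu_p$ for $p \in (0,1)$ as distinct $E$-invariant, $E$-ergodic measures and cite the general upper bound $C(E) \leq 2^{\aleph_0}$. The paper simply asserts invariance, ergodicity, and distinctness without proof, whereas you supply the (correct) verifications, including the point that each $\mu_p$ concentrates on the free part when $\Gamma$ is infinite — which is the only case in which the lemma is used.
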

\begin{proof}
$C(E)$ is at most $2^{\aleph_0}$ for any $E$ (see \cite{DJK}), so it suffices to demonstrate a distinct $E$-invariant, $E$-ergodic Borel probability measure $\mu_p$ for each $p \in (0,1)$.

For each such $p$, let $\mu_p'$ be the probability measure on the discrete space 2 defined by $\mu_p'(\{0\}) = p$. Then let $\mu_p$ be the product measure on $2^{\Gamma}$ coming from $\mu_p'$. $\mu_p$ can be equivalently defined as follows: It is the unique Borel probability measure on $2^\Gamma$ such that for every partial function $g:\Gamma \rightarrow 2$ with finite domain, it assigns the clopen set $\{x \mid x|_{\textnormal{dom}(g)} = g \}$ a measure of $p^{|g\inv(0)|}(1-p)^{|g\inv(1)|}$. Each $\mu_p$ is $E$-invariant and $E$-ergodic, and they are clearly all distinct.
\end{proof}

The groups $\Z$ and $\Z/2\Z * \Z/2\Z$ are clearly virtually infinite cyclic, as are all groups of the form $\Z/k\Z \rtimes \Z$ for all $k \neq 0$. Furthermore, the connected component equivalence relation on $F(2^\Gamma)$ for any infinite $\Gamma$ is aperiodic by definition, and since the measures $\mu_p$ in the proof of Lemma \ref{lem:measure} give a measure of 0 to each singleton set, it is also non smooth for every $\Gamma$ by Theorem 3.4 in \cite{DJK}.  Therefore, combining Lemmas \ref{lem:virtual} and \ref{lem:measure} with Theorem \ref{th:iso} gives us the main claim of this section, which we record below:

\begin{theorem}\label{th:mainiso}
Let $k \geq 3$. The marked groups $\Gamma_k$ and $\Delta_k$ appearing in Theorem \ref{th:main} can be chosen such that the connected component equivalence relations on $F(2^{\Gamma_k})$ and $F(2^{\Delta_k})$ are Borel isomorphic.
\end{theorem}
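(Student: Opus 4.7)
The plan is to apply Theorem \ref{th:iso} directly to the connected component equivalence relations $E_\Gamma$ on $F(2^{\Gamma_k})$ and $E_\Delta$ on $F(2^{\Delta_k})$. That theorem reduces Borel isomorphism of aperiodic, non-smooth, hyperfinite Borel equivalence relations to the single invariant $C(E)$. So I only need to verify those four properties on each side and check that the invariants match.

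Aperiodicity is immediate: the shift action is free on $F(2^\Gamma)$ and $\Gamma_k, \Delta_k$ are infinite, so every connected component is infinite. For hyperfiniteness I would invoke Lemma \ref{lem:virtual}: each of $\Z$, $\Z/2\Z * \Z/2\Z$, and $\Z/k\Z \rtimes_\varphi \Z$ (for any $k$ and any $\varphi$, including the trivial case giving the direct product) is virtually infinite cyclic, because in each case there is a copy of $\Z$ sitting inside with finite index (the second factor in the semidirect products, and $\langle ab \rangle$ inside $\Z/2\Z * \Z/2\Z$). Since the actions on $F(2^{\Gamma_k})$ and $F(2^{\Delta_k})$ are Borel, Lemma \ref{lem:virtual} then yields hyperfiniteness of both $E_\Gamma$ and $E_\Delta$. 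Lemma \ref{lem:measure} applies to any infinite marked group and gives $C(E_\Gamma) = C(E_\Delta) = 2^{\aleph_0}$, so the Borel isomorphism invariant matches trivially. Non-smoothness follows from the same Bernoulli measures $\mu_p$ produced in the proof of Lemma \ref{lem:measure}: they are invariant probability measures assigning mass $0$ to every singleton, and Theorem 3.4 of \cite{DJK} shows that a smooth equivalence relation cannot support such a measure.

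Because the statement of Theorem \ref{th:mainiso} only asserts that the pair $(\Gamma_k, \Delta_k)$ \emph{can be chosen} to make the equivalence relations Borel isomorphic, I am free to take either the semidirect product family from Section \ref{sec:big} or the family from Section \ref{sec:big2}; both fit the virtually cyclic hypothesis of Lemma \ref{lem:virtual}. I do not expect a genuine obstacle: the substantive work has been done in Lemmas \ref{lem:virtual} and \ref{lem:measure} and in citing Theorem \ref{th:iso}, and the present theorem amounts to assembling these three ingredients. The only point that deserves a sentence of care is verifying virtually-cyclicity for the first family, which is transparent because every group $\Z/k\Z \rtimes_\varphi \Z$ contains its $\Z$-factor with index $k$ regardless of $\varphi$.
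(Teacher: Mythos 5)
Your proposal is correct and follows essentially the same route as the paper: apply Theorem \ref{th:iso} after checking aperiodicity, hyperfiniteness (via Lemma \ref{lem:virtual} and virtual cyclicity of $\Z$, $\Z/2\Z * \Z/2\Z$, and $\Z/k\Z \rtimes_\varphi \Z$), non-smoothness (via the measures $\mu_p$ and Theorem 3.4 of \cite{DJK}), and the matching invariant $C(E) = 2^{\aleph_0}$ from Lemma \ref{lem:measure}. No gaps; this is exactly the paper's assembly of those three ingredients.
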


%%%%%%%%%%%%%%%%%%%%%%%%%%%%%%%%%%%%%%%%%%%%%%%%%%%%%%%%%%%%%%%%%%%%%%%%%%%%%%%%%%%%%%%%%%%%%%%%%%%%%%%%%%%

\section{Measurable Chromatic Numbers}\label{sec:measure}

%%%%%%%%%%%%%%%%%%%%%%%%%%%%%%%%%%%%%%%%%%%%%%%%%%%%%%%%%%%%%%%%%%%%%%%%%%%%%%%%%%%%%%%%%%%%%%%%%%%%%%%%%%%

Let $X$ be a Polish space and $G$ a Borel graph on it. Let $\mu$ be a Borel probability measure on $X$. Instead of Borel colorings, one sometimes considers \textit{$\mu$-measurable} colorings or \textit{Baire measurable} colorings, defined in the obvious ways. Let $\chi_\mu(G)$ and $\chi_{BM}(G)$ denote the \textit{$\mu$-measurable} and \textit{Baire measurable} chromatic numbers of $G$, respectively, defined in the obvious ways. We define the \textit{measure chromatic number} of $G$, denoted $\chi_M(G)$, as the supremum of $\chi_\mu(G)$ taken over all $\mu$.

Furthermore, for $\mu$ as above and $\epsilon > 0$, we can consider \textit{$\epsilon$-approximate $\mu$-measurable} colorings: Measurable partial functions $c$ with $\textnormal{dom}(c)$ Borel and $\mu(X \setminus \textnormal{dom}(c)) < \epsilon$. We define the \textit{approximate $\mu$-measurable chromatic number} of $G$, denoted $\chi_\mu^{ap}(G)$, as the smallest $k$ such that $G$ has an $\epsilon$-approximate $\mu$-measurable coloring for each $\epsilon > 0$. We define the \textit{approximate measure chromatic number} of $G$, denoted $\chi_M^{ap}$, as the supremum of $\chi_\mu^{ap}(G)$ taken over all $\mu$.

In \cite{KM}, alongside Problem \ref{prob:main}, the analogues of Problem 1 for measurable and approximately measurable chromatic numbers are posed. Note that $\chi(G) \leq \chi_M(G),\chi_{BM}(G) \leq \chi_B(G)$ for any $G$. Furthermore it's still the case that $\chi_{BM}(F(2^\Z)) = \chi_M(F(2^\Z)) = 3$, and one can easily check that the analogue of Lemma \ref{order} still holds if we ask that $<$ be a Baire measurable or $\mu$-measurable relation for any $\mu$. Therefore, the theorems of Section $\ref{sec:big}$ and their proofs give us the following extension of Theorem \ref{th:main}, resolving some of the above problems:

\begin{theorem}
Let $k \geq 3$. There exist marked groups $\Gamma_k$ and $\Delta_k$ with isomorphic Cayley graphs but for which \begin{enumerate}[label=(\roman*)]
	\item $\chi_B(F(2^{\Gamma_k})) = \chi_{BM}(F(2^{\Gamma_k}))  = \chi_{M}(F(2^{\Gamma_k})) = \chi(F(2^{\Gamma_k})) = \chi(F(2^{\Delta_k})) = \chi(\cay(\Gamma_k)) = \chi(\cay(\Delta_k)) = k,$  
	\item $\chi_B(F(2^{\Delta_k})) = \chi_{BM}(F(2^{\Delta_k})) = \chi_{M}(F(2^{\Delta_k})) = k+1$.
\end{enumerate}
\end{theorem}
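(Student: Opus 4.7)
The plan is to take the marked groups $\Gamma_k, \Delta_k$ to be exactly the pairs constructed in either Section \ref{sec:big} or Section \ref{sec:big2}, and observe that the proofs already in the paper give all the extra equalities with essentially no new work. The key monotonicity $\chi(G) \le \chi_M(G),\chi_{BM}(G) \le \chi_B(G)$ for any Borel graph $G$, noted in the paragraph preceding the statement, is the only general fact needed to convert the Borel chromatic number computations into the extended statement.

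For part (i), the chain of inequalities
\[
k \;=\; \chi(\cay(\Gamma_k)) \;\le\; \chi(F(2^{\Gamma_k})) \;\le\; \chi_M(F(2^{\Gamma_k})),\ \chi_{BM}(F(2^{\Gamma_k})) \;\le\; \chi_B(F(2^{\Gamma_k})) \;=\; k
\]
pins down every term. The first inequality is because $F(2^{\Gamma_k})$ is a disjoint union of copies of $\cay(\Gamma_k)$, and the last is Theorem \ref{th:ab} (or Theorem \ref{th:ab2}, depending on which family one selects). The chromatic numbers of the Cayley graphs of $\Delta_k$ and $\Gamma_k$ agree, because the graphs are isomorphic.

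For part (ii), the upper bound $\chi_{BM}(F(2^{\Delta_k})),\chi_M(F(2^{\Delta_k})) \le k+1$ is immediate from Theorem \ref{th:non} (or \ref{th:non2}), since any Borel coloring is both Baire measurable and $\mu$-measurable for every Borel probability measure $\mu$. The main step is therefore the lower bound, and this is where I would retrace the lower-bound arguments of Section \ref{sec:big}. For the first family, the proof of Theorem \ref{th:non} takes a hypothetical Borel $k$-coloring $c$ of $F(2^{\Delta_k})$, reads off the conjugacy class of the cyclic permutation $\tau_E$ for each $\Z/k\Z$ orbit $E$, and uses this to build a $2$-coloring of the $\Z$ direction in $F(2^\Z)$. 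The map $c \mapsto f$ producing that $2$-coloring is definable from $c$ by a countable Boolean combination, so if $c$ is Baire measurable (resp.\ $\mu$-measurable) then so is $f$; one then contradicts $\chi_{BM}(F(2^\Z)) = \chi_M(F(2^\Z)) = 3$ exactly as the proof of Theorem \ref{th:4}/\ref{th:non} contradicted $\chi_B(F(2^\Z))=3$. For the second family, the proof of Theorem \ref{th:non2} uses only Lemma \ref{order}; as remarked in the paragraph preceding the theorem, its statement and proof go through verbatim for Baire measurable and $\mu$-measurable relations, so the same argument gives $\chi_{BM}(F(2^{\Delta_k'})), \chi_M(F(2^{\Delta_k'})) > k$.

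The only point that requires any care is verifying that each operation used in extracting the contradictory low-chromatic objects (the orbit function $f$, the induced relation $<$ on $F(2^{\Delta_k'})$) preserves Baire and $\mu$-measurability; this is routine since all the constructions are given by countable Boolean operations and definable selections, and the analogue of Lemma \ref{order} in the measurable categories is the genuine obstacle, which is already flagged by the author as holding. Hence no new ideas beyond Sections \ref{sec:big} and \ref{sec:big2} are needed, and the theorem follows by reading those proofs in the Baire and measure categories.
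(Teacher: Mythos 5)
Your proposal is correct and is essentially identical to the paper's own argument, which consists of precisely the observations you list: the inequalities $\chi \le \chi_{M},\chi_{BM} \le \chi_B$ dispose of part (i) and the upper bounds in (ii), while the lower bounds in (ii) come from rerunning the Section \ref{sec:big} arguments using $\chi_{BM}(F(2^\Z)) = \chi_M(F(2^\Z)) = 3$ and the Baire measurable/$\mu$-measurable analogue of Lemma \ref{order}. The one step deserving a word of care (which the paper also leaves implicit) is that for the first family the pullback $f \circ g$ from the proof of Theorem \ref{th:4} lands in a meager, null copy of $F(2^\Z)$, so rather than literally pulling back one should, in the Baire measurable case, apply the $0$--$1$ law argument of Proposition \ref{prop:2} directly to the Baire measurable, $(\Z/k\Z \rtimes 2\Z)$-invariant sets $f\inv(1)$ and $f\inv(2)$, and, in the measure case, either do likewise with ergodicity or work with the pushforward measure $g_*\mu$.
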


However, the approximately measurable version of Problem \ref{prob:main} seems to still be open:

\begin{prob}\label{prob:approx}
Let $\Gamma$ and $\Delta$ be two marked groups with isomorphic Cayley graphs. Must we have $\chi_M^{ap}(F(2^\Gamma)) = \chi_M^{ap}(F(2^\Delta))$?
\end{prob}

%%%%%%%%%%%%%%%%%%%%%%%%%%%%%%%%%%%%%%%%%%%%%%%%%%%%%%%%%%%%%%%%%%%%%%%%%%%%%%%%%%%%%%%%%%%%%%%%%%%%%%%%%%%%%

\section{Further Problems}\label{sec:probs}

%%%%%%%%%%%%%%%%%%%%%%%%%%%%%%%%%%%%%%%%%%%%%%%%%%%%%%%%%%%%%%%%%%%%%%%%%%%%%%%%%%%%%%%%%%%%%%%%%%%%%%%%%%%%%

In this final section, we consider some natural questions that arise in light of Theorem \ref{th:main}.

\subsection{Larger Differences in Borel Chromatic Numbers}

Perhaps the most natural such question is the following:
\begin{prob}\label{prob:big}
Let $\Gamma$ and $\Delta$ be two marked groups with isomorphic Cayley graphs. Must we have $$\left| \chi_B(F(2^\Gamma)) - \chi_B(F(2^\Delta)) \right| \leq 1 ?$$
\end{prob}
This question has a connection to one posed by Marks: In \cite{M15}, he proves
\begin{theorem}\label{th:marks}
Let $\Gamma$ and $\Delta$ be two marked groups. Then
\begin{equation}
\chi_B(F(2^{\Gamma * \Delta})) \geq \chi_B(F(2^{\Gamma})) + \chi_B(F(2^\Delta)) - 1,
\end{equation}
where the generating set of $\Gamma * \Delta$ is the union of those of $\Gamma$ and $\Delta$.
\end{theorem}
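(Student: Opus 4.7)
The plan is to follow the Borel determinacy strategy of $\cite{M15}$. Set $p = \chi_B(F(2^\Gamma))$, $q = \chi_B(F(2^\Delta))$ and assume, toward contradiction, a Borel coloring $c : F(2^{\Gamma * \Delta}) \to \{1,\dots,p+q-2\}$. Fix a partition of the color set as $C_\Gamma \sqcup C_\Delta$ with $|C_\Gamma| = p-1$ and $|C_\Delta| = q-1$. The aim is to extract from $c$ either a Borel $(p-1)$-coloring of $F(2^\Gamma)$ or a Borel $(q-1)$-coloring of $F(2^\Delta)$, either of which contradicts the definition of $p$ or $q$.

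The first step is to build a continuous ``blending'' map $\Phi : 2^\Gamma \times 2^\Delta \to 2^{\Gamma * \Delta}$. Exploiting the unique reduced-word expression in $\Gamma * \Delta$, I would define $\Phi(x,y)(w)$ by consulting $x$ or $y$ at the final syllable of $w$, with a small ad-hoc choice at the identity. The crucial properties to verify are (i) $\Phi$ sends $F(2^\Gamma) \times F(2^\Delta)$ into $F(2^{\Gamma * \Delta})$, and (ii) shifting $x$ by a $\Gamma$-generator $s$ (or $y$ by a $\Delta$-generator) produces an element $\Phi(s \cdot x, y)$ lying in the $\Gamma * \Delta$-orbit of $\Phi(x,y)$ in a way that realizes a graph-edge of $F(2^{\Gamma * \Delta})$ between appropriately translated versions. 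Given such a $\Phi$, I would set up a two-player Borel game, parametrized by a base point $z \in F(2^{\Gamma * \Delta})$, in which Player I plays bits of $x \in 2^\Gamma$, Player II plays bits of $y \in 2^\Delta$, and the payoff is determined by whether $c$ of a $(\Phi(x,y), z)$-derived element of $F(2^{\Gamma * \Delta})$ lies in $C_\Gamma$. The payoff set is Borel, so Borel determinacy yields a winning strategy for one of the two players, and standard Borel selection for winning strategies in Borel games provides a Borel-measurable choice of strategy as $z$ varies.

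Finally, running the winning strategies against a fixed opponent's play produces, depending on who wins, a Borel map $F(2^\Gamma) \to C_\Gamma$ or $F(2^\Delta) \to C_\Delta$. Property (ii) of $\Phi$, combined with the fact that each strategy only controls one of the two inputs, forces this extracted map to be a proper coloring, yielding the desired contradiction. The main obstacle---and the key technical content of Marks' argument---is designing $\Phi$ together with the game so that property (ii) actually forces the extracted map to be a genuine coloring, and so that strategies chosen at different base points combine coherently into a single globally defined Borel function. This is exactly where the free-product structure (which enforces a unique interleaving of $\Gamma$- and $\Delta$-syllables) plays a decisive role, and where the $-1$ in the bound originates: the two players' colorings are forced to agree on at most one common color.
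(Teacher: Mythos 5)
First, a remark on scope: the paper does not prove Theorem \ref{th:marks} at all --- it is quoted from \cite{M15} --- so your proposal has to be measured against Marks' determinacy argument itself. Your high-level plan is the right one: split the $p+q-2$ colors into $C_\Gamma$ of size $p-1$ and $C_\Delta$ of size $q-1$, and use a game to extract either a Borel $(p-1)$-coloring of $F(2^\Gamma)$ or a Borel $(q-1)$-coloring of $F(2^\Delta)$; this is exactly where the ``$-1$'' comes from. But the mechanism you describe contains a step that fails: you propose a game for each base point $z \in F(2^{\Gamma*\Delta})$ and then invoke ``standard Borel selection for winning strategies in Borel games'' to choose strategies Borel-uniformly in $z$. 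There is no such selection theorem; the relation ``$\sigma$ is a winning strategy for the game coded by $z$'' is in general only $\Pi^1_1$, and it cannot be uniformized in a Borel way. Marks' argument is engineered precisely to avoid any such selection: there is a \emph{single} game attached to the single Borel set $A = c^{-1}(C_\Delta)$, in which the two players jointly build one point $y \in 2^{\Gamma*\Delta}$ --- one player decides $y(w)$ for reduced words $w$ in one syllable class, the other for the complementary class --- and Borel determinacy is invoked exactly once to fix one winning strategy. The continuous equivariant map $F(2^\Gamma) \to F(2^{\Gamma*\Delta})$ (or $F(2^\Delta) \to F(2^{\Gamma*\Delta})$) is then obtained by playing that one strategy against every possible input encoded in the opponent's moves; continuity and equivariance come from the combinatorics of the coordinate split, not from a parametrized family of games.

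Two further points need repair even granting the above. The payoff you propose --- whether $c$ of a single $\Phi(x,y)$-derived point lies in $C_\Gamma$ --- is too weak: to conclude that the pulled-back coloring is a proper $(p-1)$-coloring of $F(2^\Gamma)$, the image of the equivariant map must lie entirely inside $c^{-1}(C_\Gamma)$, so the winning condition must quantify over all $\Gamma$-translates of the constructed point, and it must also assign blame to one player when the constructed point fails to lie in the free part. Finally, the roles in your last step are reversed: the winning strategy that yields a coloring of $F(2^\Gamma)$ belongs to the player who does \emph{not} control the $\Gamma$-input, since the map $x \mapsto y_x$ must be defined for every $x \in F(2^\Gamma)$ by letting the opponent play $x$. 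As written, ``running the winning strategies against a fixed opponent's play'' produces a single point of $2^{\Gamma*\Delta}$, not a function on $F(2^\Gamma)$.
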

However, it is open whether or not this bound can ever be exceeded:
\begin{prob}\label{prob:marks}
Let $\Gamma$ and $\Delta$ be two marked groups. Must we have
\begin{equation}
\chi_B(F(2^{\Gamma * \Delta})) = \chi_B(F(2^{\Gamma})) + \chi_B(F(2^\Delta)) - 1?
\end{equation}
\end{prob}
Now, fix a $k \geq 3$ and let $\Gamma_k$ and $\Delta_k$ be two groups witnessing Theorem \ref{th:main} for this value of $k$. Then $\Gamma_k * \Gamma_k$ and $\Delta_k * \Delta_k$ still have isomorphic Cayley graphs, but the lower bound of Theorem \ref{th:marks} is $2k-1$ for the former and $2k+1$ for the latter. Therefore, if the shift graph of $\Gamma_k * \Gamma_k$ does not exceed its lower bounds, we get a negative answer to Problem \ref{prob:big}. Thus, Problems \ref{prob:big} and \ref{prob:marks} cannot both have a positive answer. Furthermore, if Problem \ref{prob:marks} has a positive answer, then by taking repeated free products of $\Gamma_k$ and $\Delta_k$ we find that the distance between Borel chromatic numbers for the shift graphs of marked groups with isomorphic Cayley graphs can be arbitrarily large.

We mention in passing an application of the groups $\Delta_3$ and $\Delta_3'$ appearing in Section \ref{sec:big} to another problem from \cite{M15}. Let $\CC$ be the class of all marked groups $\Gamma$ such that $\chi_B(F(2^\Gamma)) = d+1$, where $d$ is the size of $\Gamma$'s generating set. By Proposition 4.6 in \cite{KST}, the lower bound in Theorem \ref{th:marks} is sharp when $\Gamma$ and $\Delta$ are in $\CC$. In \cite{M15}, the claim is made that this is the only known case where the bound is sharp

We define a nonempty class of marked groups, $\D$, disjoint from $\CC$, such that the lower bound of Theorem \ref{th:marks} is sharp whenever $\Gamma \in \CC$ and $\Delta \in \D$.

Let $G$ be a graph on a set $X$. A subset $A \subset X$ is called \textit{connected} if the induced graph on it is connected. It is called \textit{biconnected} if $|A| \geq 2$ and $A \setminus \{x\}$ is connected for any $x \in A$. $G$ is called biconnected if $X$ is biconnected. $G$ is called a \textit{Gallai Tree} if for every maximal biconnected $A \subset X$, the induced graph on $A$ is a complete graph or an odd cycle. We have the following theorem from \cite{CMT} regarding Borel chromatic numbers and Gallai Trees:

\begin{theorem}
Let $G$ be a Borel graph on a standard Borel space of bounded degree $d$. If every connected component of $G$ is not a Gallai tree, $\chi_B(G) \leq d$.
\end{theorem}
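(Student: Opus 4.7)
The plan is to imitate the classical proof of Brooks' theorem, from which the non-Borel version of the statement follows by a block-decomposition argument, in the Borel setting. Classically, a connected graph $G$ of maximum degree $d$ that is not a Gallai tree contains a biconnected subset $B$ whose induced subgraph is neither a complete graph nor an odd cycle. Inside such a $B$ one locates a reference vertex $w$ together with two non-adjacent neighbors $u, v$ of $w$ such that $B \setminus \{u,v\}$ remains connected. One then $d$-colors $B$ by assigning $u$ and $v$ the same color and filling in the rest of $B$ in reverse breadth-first order from $w$, so that each vertex is colored once at least one of its neighbors has already been colored, leaving at most $d-1$ forbidden colors at every step. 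The resulting coloring is then extended to all of $G$ by the same inductive procedure from $B$ outward.

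In the Borel setting I would first use Lemma \ref{discrete} to fix a Borel maximal $N$-discrete set $A \subset X$ for $N$ sufficiently large in terms of $d$, and then select in a Borel fashion a finite witness structure around each $x \in A$: a biconnected $B_x$ contained in the $N$-ball around $x$ whose induced graph is not a clique or an odd cycle, together with a triple $(u_x, v_x, w_x)$ as above. Existence of such $B_x$ follows, for each $x$, from the non-Gallai-tree hypothesis applied to the connected component of $x$; the Borel selection of one canonical witness per $x$ is achieved by applying Lemma \ref{choice} to the (standard Borel) space of finite witness configurations in the $N$-ball around each point. Taking $N$ large enough ensures that the witnesses around distinct points of $A$ are pairwise disjoint, so each witness can be locally $d$-colored by the classical procedure, yielding a partial Borel $d$-coloring $c_0$ on a Borel set $X_c \subset X$ that meets every connected component of $G$.

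It then remains to extend $c_0$ to the rest of $G$. Since $X_c$ hits every connected component, graph distance to $X_c$ defines an everywhere-finite Borel function on $X \setminus X_c$ with Borel level sets. On each level set, every vertex has at least one neighbor on a strictly lower level and hence already colored, leaving at most $d-1$ forbidden colors; one colors the levels outward, at each stage selecting a permitted color via Lemma \ref{choice}. The main obstacle I anticipate is the first step, namely, ensuring that a witness structure can be chosen at bounded radius from each point of a Borel maximal $N$-discrete set, with the selected witnesses pairwise disjoint and together hitting every connected component. The non-Gallai-tree hypothesis furnishes a witness somewhere in every component, but coordinating this selection with the choice of $A$ requires bookkeeping; one workaround is to build $A$ adaptively, first choosing a Borel family of candidate witnesses in $G$ via Lemma \ref{choice} and then defining $A$ as a maximal $N$-discrete set among their reference vertices, iterating on any Borel sub-region of $X$ not yet covered by a witness patch.
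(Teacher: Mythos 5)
First, a remark on provenance: the paper does not prove this statement at all --- it is quoted as a black box from \cite{CMT} --- so there is no in-paper proof to compare against, and your proposal has to be judged on its own terms. On those terms it contains a fatal gap in the extension step, and the gap is not a repairable technicality.

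The problem is the direction of the greedy extension. In the classical Lov\'asz proof of Brooks' theorem one colors in \emph{reverse} breadth-first order toward the reference vertex $w$: every vertex other than $w$ then has at least one neighbor \emph{not yet} colored (the one nearer to $w$), hence at most $d-1$ colored neighbors and at most $d-1$ forbidden colors. Your write-up inverts this: you color outward from $X_c$ by increasing distance and assert that having a neighbor on a strictly lower level, ``hence already colored,'' leaves at most $d-1$ forbidden colors. An already-colored neighbor \emph{contributes} a forbidden color; it is an uncolored neighbor that buys slack. Any vertex at which the distance-to-$X_c$ function has a local maximum has all of its at most $d$ neighbors at its own level or below, so all $d$ colors may be forbidden when its turn comes; moreover each ``level'' is an infinite Borel graph with internal edges, so coloring a level is itself a Borel coloring problem that Lemma \ref{choice} (a single selection from each finite set) cannot perform --- it cannot chain infinitely many dependent greedy choices along a component. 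The decisive objection is that your extension argument nowhere uses the non-Gallai-tree hypothesis: the witnesses only enter as the seed $X_c$, and if the outward extension were valid one could instead seed it with any Borel maximal independent set given a single color and conclude $\chi_B(G)\leq d$ for \emph{every} Borel graph of degree at most $d$, contradicting Marks' theorem \cite{M15} that the $d$-regular acyclic graph $F(2^{(\Z/2\Z)^{*d}})$ has Borel chromatic number $d+1$. The correct coloring order (farthest from the witness first, witness last) has no starting point in an infinite component, and overcoming precisely this obstruction is the substance of the proof in \cite{CMT}, which goes through degree-choosability of finite non-Gallai-trees and a genuinely more elaborate decomposition. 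Your secondary worry about witness placement is also legitimate --- a component may contain only one block that is not a clique or an odd cycle, arbitrarily far from most points of a maximal $N$-discrete set, so ``one witness per point of $A$'' is unattainable --- but it is subsumed by the failure of the extension step.
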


Now, define $\D$ to be the set of marked groups $\Delta$ such that the Cayley graph of $\Delta$ is biconnected and $\chi_B(F(2^\Delta)) = d$, where $d$ is the size of $\Delta$'s generating set. The marked groups $\Delta_3$ and $\Delta_3'$ from Section \ref{sec:big} are in $\D$, as are the marked groups $(\Z,\{\pm 1, \pm 2\})$ (see page 11 in \cite{KST}) and $(\Z/n\Z,\{\pm 1\})$ for $n$ even. We now prove our main claim about $\D$:

\begin{prop}
Let $\Gamma \in \CC$ and $\Delta \in D$. Then the lower bound of Theorem \ref{th:marks} is sharp for the shift graph of $\Gamma * \Delta$.
\end{prop}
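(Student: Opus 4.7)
Writing $d_\Gamma = |S_\Gamma|$ and $d_\Delta = |S_\Delta|$, the generating set of $\Gamma * \Delta$ is the disjoint union $S_\Gamma \sqcup S_\Delta$, so $F(2^{\Gamma * \Delta})$ has bounded degree $d_\Gamma + d_\Delta$. Using $\Gamma \in \CC$ and $\Delta \in \D$, Theorem \ref{th:marks} yields the lower bound
$$\chi_B(F(2^{\Gamma * \Delta})) \geq \chi_B(F(2^\Gamma)) + \chi_B(F(2^\Delta)) - 1 = (d_\Gamma + 1) + d_\Delta - 1 = d_\Gamma + d_\Delta,$$
so the task reduces to the matching upper bound. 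For this I would invoke the Gallai tree theorem of \cite{CMT} recalled above: since every connected component of $F(2^{\Gamma * \Delta})$ is graph-isomorphic to $\cay(\Gamma * \Delta)$, it suffices to show that $\cay(\Gamma * \Delta)$ is not a Gallai tree.

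The witnessing block will be the coset $\Delta \subseteq \Gamma * \Delta$, which spans an induced copy of $\cay(\Delta)$. I would verify three things in turn. First, $\cay(\Delta)$ is biconnected by the hypothesis $\Delta \in \D$. Second, the coset $\Delta$ is a maximal biconnected subset of $\cay(\Gamma * \Delta)$: by the free product normal form, every cycle in $\cay(\Gamma * \Delta)$ is confined to a single coset of $\Gamma$ or of $\Delta$, and so every $g \in \Gamma * \Delta$ is a cut vertex---it is the unique common point of $g\Gamma$ and $g\Delta$, and deleting it disconnects these two pieces---hence no biconnected subset of $\cay(\Gamma * \Delta)$ can span two distinct cosets. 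Third, $\cay(\Delta)$ is neither a complete graph nor an odd cycle: if it were $K_n$, then $\Delta$ would be finite with $S_\Delta = \Delta \setminus \{\id\}$, giving $\chi_B(F(2^\Delta)) = n = d_\Delta + 1$; if it were $C_{2m+1}$, then $\Delta \cong \Z/(2m+1)\Z$ with $d_\Delta = 2$ and $\chi_B(F(2^\Delta)) = 3 = d_\Delta + 1$. Either way $\Delta$ would land in $\CC$ rather than $\D$. Combining the three points, $\cay(\Gamma * \Delta)$ has a maximal biconnected subset that is neither complete nor an odd cycle and is therefore not a Gallai tree.

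The main obstacle is the block-structure step: justifying that distinct cosets of $\Gamma$ and $\Delta$ inside $\cay(\Gamma * \Delta)$ meet only at single cut vertices, so that the $\Delta$-coset is a full block. This is the graphical shadow of Bass--Serre theory for free products and is essentially standard, but a short argument via the normal form theorem is needed to rule out cycles crossing between cosets. Once that is in place, the three verifications combine to give the upper bound $\chi_B(F(2^{\Gamma * \Delta})) \leq d_\Gamma + d_\Delta$, matching the lower bound from Theorem \ref{th:marks}.
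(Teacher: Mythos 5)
Your proposal is correct and follows essentially the same route as the paper's proof: reduce to showing $\cay(\Gamma * \Delta)$ is not a Gallai tree (since the lower bound $e+d$ from Theorem \ref{th:marks} equals the vertex degree), exhibit a $\Delta$-coset as a maximal biconnected set via the cut-vertex structure of the free product, and rule out the complete-graph and odd-cycle cases using $\chi_B(F(2^\Delta)) = d$. The only difference is that you spell out the normal-form justification of the cut-vertex claim and the two excluded cases in more detail than the paper does.
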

\begin{proof}
Suppose $\Gamma \in \CC$ and $\Delta \in \D$. Let $e$ be the size of $\Gamma$'s generating set and $d$ the size of $\Delta$'s. Then the lower bound of Theorem \ref{th:marks} for $\chi_B(F(2^{\Gamma * \Delta}))$ is $e+d$, and $e+d$ is also the degree of each vertex of $F(2^{\Gamma * \Delta})$. Therefore it suffices to prove the Cayley graph of $\Gamma * \Delta$ is not a Gallai tree.

Let $E \subset \Gamma * \Delta$ be an orbit of $\Delta$ under the left multiplication action of $\Delta$ on $\Gamma * \Delta$. The induced graph on $E$ is isomorphic to the Cayley graph of $\Delta$, so $E$ is biconnected. We claim $E$ is maximal among biconnected sets. Suppose $A \subset \Gamma * \Delta$ is connected and strictly contains $E$. Then $A$ contains some element $gy$ for $y \in E$ and $g$ a generator of $\Gamma$. However, every path from $E$ to $gy$ in the Cayley graph must pass through $y$, so $A$ is not biconnected.

Now, the Cayley graph of $\Delta$ cannot be an odd cycle or complete graph, since then the chromatic number of the graph would be $d+1$. Therefore we are done.
\end{proof}

We now return to the main topic of this subsection. Recall from Section \ref{sec:intro} that the analogue of Problem \ref{prob:main} where we consider edge chromatic numbers has an easier negative answer. However, using $\chi_B'$ to denote the Borel edge chromatic number, the following analogue of Problem \ref{prob:big} seems to be open:
\begin{prob}\label{prob:bigedge}
Let $\Gamma$ and $\Delta$ be two marked groups with isomorphic Cayley graphs. Must we have $$\left| \chi_B'(F(2^\Gamma)) - \chi_B'(F(2^\Delta)) \right| \leq 1 ?$$
\end{prob}
Taking inspiration from the example from Section \ref{sec:intro}, we might hope to use the groups $(\Z/2\Z)^{*2n}$ and $\Z^{*n}$ with their natural generators, which have isomorphic Cayley graphs. By labeling each edge according to the generator responsible for it, we can see that
\begin{equation}
\chi_B'(F(2^{(\Z/2\Z)^{*2n}})) = 2n.
\end{equation}
For $\Z^{*n}$, when $n>1$ we have only the following bounds (see Problem 5.37 in \cite{KM}):
\begin{equation}
2n \leq \chi_B'(F(2^{\Z^{*n}})) \leq 3n.
\end{equation}
Investigating edge colorings of this shift graph may be the best approach to resolving Problem \ref{prob:bigedge}. 

\subsection{Restrictions on the Marked Groups}

It is also natural to ask whether Theorem \ref{th:main} will continue to hold if we put certain restrictions on what the marked groups involved can be. For each $k \geq 3$, we pose the following problems arising in this way: We call a graph \textit{bipartite} if its chromatic number is less than or equal to 2.

\begin{prob}\label{prob:bipartite}
Are there marked groups $\Gamma_k,\Delta_k$ witnessing Theorem \ref{th:main} for this value of $k$ such that the Cayley graphs of the groups are bipartite?
\end{prob}

\begin{prob}\label{prob:tf}
Are there marked groups $\Gamma_k,\Delta_k$ witnessing Theorem \ref{th:main} for this value of $k$ such that each group is torsion free?
\end{prob}

In relation to Problem \ref{prob:bipartite}, we note that Proposition \ref{prop:2} remains true when the Borel chromatic number is replaced with the Baire measurable or measure chromatic number:

\begin{prop}
Let $(\Gamma,S)$ be an infinite marked group. Then $\chi_{BM}(F(2^\Gamma)) > 2$ and $\chi_M(F(2^\Gamma)) > 2$.
\end{prop}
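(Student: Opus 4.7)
The plan is to adapt the proof of Proposition \ref{prop:2} to each new definability class, using the same homomorphism-plus-ergodicity strategy. If $\cay(\Gamma)$ is not $2$-colorable then $\chi(F(2^\Gamma)) \geq 3$, and since $\chi \leq \chi_{BM}$ and $\chi \leq \chi_M$ we are done; so I assume $\cay(\Gamma)$ is bipartite and, exactly as in the proof of Proposition \ref{prop:2}, obtain a nontrivial homomorphism $\varphi: \Gamma \to \Z/2\Z$. In each setting I argue by contradiction: given a $2$-coloring $c: F(2^\Gamma) \to \{1,2\}$ of the appropriate class, I set $A_i = c^{-1}(i)$; both $A_i$ are invariant under the infinite index-two subgroup $\ker \varphi$, while every $s \in S$ satisfies $s \cdot A_1 = A_2$.

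The Baire measurable case requires no new work. If $c$ is Baire measurable, each $A_i$ has the Baire property, and the homogeneity argument in the proof of Proposition \ref{prop:2} uses only topological transitivity of the continuous $\ker \varphi$-action on $F(2^\Gamma)$. Theorem 8.46 of \cite{K95} is stated for invariant sets with the Baire property, so it still forces each $A_i$ to be meager or comeager; but shifts are homeomorphisms and therefore preserve category, so $s \cdot A_1 = A_2$ produces the same contradiction as in the original proof.

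For the measure chromatic number it suffices to exhibit a single Borel probability measure $\mu$ on $F(2^\Gamma)$ admitting no $\mu$-measurable $2$-coloring. I would take $\mu$ to be the Bernoulli measure on $2^\Gamma$ coming from the uniform measure on $2$, which assigns full mass to $F(2^\Gamma)$: for each $g \neq \id$ the fixed-point set $\{x : g \cdot x = x\}$ is Bernoulli-null, since the action of $\langle g \rangle$ on the infinite group $\Gamma$ has infinitely many orbits and hence imposes infinitely many independent non-trivial constraints. The shift action of $\Gamma$ is measure preserving, and its restriction to the infinite subgroup $\ker \varphi$ is ergodic because Bernoulli shifts of infinite countable groups are mixing. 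Hence the $\ker \varphi$-invariant measurable sets $A_1$ and $A_2$ each have $\mu$-measure in $\{0,1\}$, while $s \cdot A_1 = A_2$ with $s$ measure preserving forces $\mu(A_1) = \mu(A_2)$, contradicting $\mu(A_1) + \mu(A_2) = 1$.

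There is no real obstacle here; the arguments are Proposition \ref{prop:2} rephrased in two adjacent categories. The only facts I need to quote are the topological $0$-$1$ law for sets with the Baire property (Theorem 8.46 of \cite{K95}) and ergodicity of Bernoulli shifts for infinite countable groups, both standard.
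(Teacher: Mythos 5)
Your proposal is correct and follows essentially the same route as the paper: the Baire measurable case is handled by noting that Theorem 8.46 of \cite{K95} applies to all sets with the Baire property, and the measure case replaces the topological $0$--$1$ law with ergodicity of the Bernoulli shift restricted to the infinite subgroup $\ker\varphi$ (the paper uses the measures $\mu_p$ from Lemma \ref{lem:measure}; your choice $p=1/2$ is one of these). The only cosmetic difference is that your justification that $F(2^\Gamma)$ is conull cites ``infinitely many orbits'' of $\langle g\rangle$, which should be adjusted when $g$ has infinite order (there a single infinite orbit already gives a null constancy constraint), but the conclusion is standard and the argument is otherwise identical.
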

\begin{proof}
For the Baire measurable number, the proof of Proposition \ref{prop:2} does not need to be modified, since Theorem 8.46 in \cite{K95} applies to all Baire measurable sets, not just Borel sets.

For the measure chromatic number the argument is very similar: Let $\mu$ be any of the measures $\mu_p$ from the proof of Lemma \ref{lem:measure}. Assume we have a $\mu$-measurable $2$-coloring $c$. In the notation of the proof of Proposition \ref{prop:2}, the action of $\ker \varphi$ on $F(2^\Gamma)$ is $\mu$-ergodic (since $\ker \varphi$ is infinite) so $\mu(A_i)$ must be 0 or 1 for each $i$. Again, since $A_1$ and $A_2$ partition $F(2^\Gamma)$ and $s \cdot A_1 = A_2$ for any $s \in S$, this is a contradiction.
\end{proof}

Combining this with some bounds from \cite{CM} gives us the following:
\begin{prop}
Let $(\Gamma,S)$ be an infinite marked group with a bipartite Cayley graph. Then $\chi_{BM}(F(2^\Gamma)) = 3$. If the connected component equivalence relation on this shift graph is hyperfinite, then we additionally have $\chi_{M}(F(2^\Gamma)) = 3$.
\end{prop}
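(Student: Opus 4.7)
The plan is to sandwich both $\chi_{BM}(F(2^\Gamma))$ and (in the hyperfinite case) $\chi_M(F(2^\Gamma))$ between $3$ and $3$. The lower bound of $3$ in each case is immediate from the preceding proposition, since $\Gamma$ is infinite. So the work is entirely in establishing the upper bound of $3$.

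For the upper bound, the first step is to observe that the ordinary chromatic number of the shift graph equals that of the Cayley graph: every connected component of $F(2^\Gamma)$ is isomorphic to $\cay(\Gamma)$, so bipartiteness of $\cay(\Gamma)$ gives $\chi(F(2^\Gamma)) = 2$. The second step is to invoke the relevant bound from \cite{CM}. The results there give, for any locally finite Borel graph $G$, an inequality of the form $\chi_{BM}(G) \leq 2\chi(G) - 1$, and, under the additional hypothesis that the connected component equivalence relation of $G$ is hyperfinite, an analogous bound $\chi_M(G) \leq 2\chi(G) - 1$. Plugging $\chi(G) = 2$ into these bounds yields $\chi_{BM}(F(2^\Gamma)) \leq 3$ unconditionally, and $\chi_M(F(2^\Gamma)) \leq 3$ when the component equivalence relation is hyperfinite.

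Combining the two sides gives equality in both cases, and no further argument is needed. The main (indeed only) substantive ingredient is the quoted $2\chi - 1$ bound from \cite{CM}; everything else is bookkeeping, so there is no real obstacle beyond correctly citing the bound that fits the category (Baire measurable versus $\mu$-measurable, with or without hyperfiniteness).
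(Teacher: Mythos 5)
Your proposal is correct and is essentially the paper's own argument: the paper likewise obtains the lower bound of $3$ from the preceding proposition and the upper bound from the $\chi_{BM}(G) \leq 2\chi(G)-1$ and (hyperfinite) $\chi_\mu(G) \leq 2\chi(G)-1$ bounds of \cite{CM} applied with $\chi(\cay(\Gamma)) = 2$. The paper simply states this combination without spelling it out, so your write-up fills in exactly the intended details.
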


Therefore, the Baire measurable analogue of Problem 6 has a negative answer, as does the measurable analogue when our connected component equivalence relations are hyperfinite.

For Problem \ref{prob:tf}, we conjecture that the answer is `yes' for each $k$. We offer the following pair of marked groups as a candidate to resolve the $k=3$ case:

Let $\Gamma = \Z \times \Z$ and $\Delta = \Z \rtimes_\varphi \Z$, where $\varphi$ is the homomorphism sending 1 to the inversion map and for each group we use the generators represented by $(\pm 2,0),(\pm 3, 0), (0,1)$. The Cayley graphs of these marked groups are clearly isomorphic, and their chromatic numbers are 3. However,
\begin{theorem}
$\chi_B(F(2^\Delta)) > 3$
\end{theorem}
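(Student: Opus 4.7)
The plan is to mirror the lower-bound strategy of Theorem \ref{th:non}, using the embedding of $F(2^\Z)$ into $F(2^\Delta)$ from the proof of Theorem \ref{th:4}. Suppose for contradiction that $c: F(2^\Delta) \to \{1,2,3\}$ is a Borel $3$-coloring. On each ``horizontal'' orbit $E$---that is, each orbit of the subgroup $\Z \times \{0\}$---the induced graph is $\cay(\Z, \{\pm 2, \pm 3\})$, and $c|_E$ is a $3$-coloring of it. Specializing the analysis in Section \ref{sec:big2} to $k=3$, every such coloring has constant $\delta$-function equal to $\pm 1 \in \Z/3\Z$, which encodes a canonical cyclic permutation $\tau_E \in \{(1\ 2\ 3), (1\ 3\ 2)\}$ of the color alphabet.

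The heart of the argument will be to show that the action of $(0,1)$ inverts this cycle, i.e., $\tau_{(0,1) \cdot E} = \tau_E\inv$. Fixing $x \in E$, setting $y = (0,1) \cdot x$, and writing $f(j) = c((j,0) \cdot x)$ and $h(j) = c((j,0) \cdot y)$, the key semidirect-product identity $(0, -1) \cdot ((j, 0) \cdot y) = (-j, 0) \cdot x$ translates the generator-$(0, \pm 1)$ edges into the coloring constraint $h(j) \neq f(-j)$ for all $j$. Since $i \mapsto -i$ is a cycle-reversing graph automorphism of $\cay(\Z, \{\pm 2, \pm 3\})$, the coloring $j \mapsto f(-j)$ carries cycle $\tau_E\inv$. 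The claim therefore reduces to the combinatorial fact that any two $3$-colorings of $\cay(\Z, \{\pm 2, \pm 3\})$ which disagree at every vertex must share the same cycle. I expect this to be the main obstacle, but it should follow from a finite case analysis: every such coloring is a shift of either the period-$5$ pattern $11233$ (cycle $(1\ 2\ 3)$) or $11322$ (cycle $(1\ 3\ 2)$), and one simply checks that no shift of one pattern is pointwise disjoint from any shift of the other.

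With this in hand, define $\psi: F(2^\Delta) \to \{1, 2\}$ by $\psi(x) = 1$ when $\tau_{E_x} = (1\ 2\ 3)$ and $\psi(x) = 2$ otherwise, where $E_x$ is the horizontal orbit of $x$. Then $\psi$ is Borel, invariant under $\Z \times \{0\}$, and satisfies $\psi((0,1) \cdot x) \neq \psi(x)$. To close the proof, I would use the same device as in Theorem \ref{th:4}: the continuous map $G: F(2^\Z) \to F(2^\Delta)$ defined by $G(z)(i, n) = z(n)$ if $i = 0$ and $G(z)(i,n) = 0$ otherwise. A short check in $\Z \rtimes_\varphi \Z$ confirms that $G(z)$ lies in the free part and satisfies $G(1 \cdot z) = (0, 1) \cdot G(z)$. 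Then $\psi \circ G$ is a Borel $2$-coloring of $F(2^\Z)$, contradicting the well-known equality $\chi_B(F(2^\Z)) = 3$.
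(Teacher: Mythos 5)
Your overall strategy is exactly the paper's: associate to each $\langle(1,0)\rangle$-orbit $E$ the $3$-cycle $\tau_E$ determined by the constant $\delta$-function of $c|_E$, show that the generator $(0,1)$ inverts it, read off a Borel $\Z\times\{0\}$-invariant $2$-coloring of the orbits, and push it through the equivariant embedding $G$ of $F(2^\Z)$ to contradict $\chi_B(F(2^\Z))=3$. All of the surrounding computations are right, including the identity $(0,-1)(j,0)(0,1)=(-j,0)$, the resulting constraint $h(j)\neq f(-j)$, and the reduction of $\tau_{(0,1)\cdot E}=\tau_E\inv$ to the statement that two $3$-colorings of $\cay(\Z,\{\pm2,\pm3\})$ which disagree at every vertex have the same cycle. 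This is precisely the step the paper compresses into ``one can easily check.''

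The gap is in your proposed verification of that step. The classification you invoke --- every $3$-coloring of $\cay(\Z,\{\pm2,\pm3\})$ is a shift of the period-$5$ pattern $11233$ or of $11322$ --- is false. The constant-$\delta$ analysis only shows that such a coloring is a concatenation of monochromatic blocks of length $1$ or $2$ whose colors advance cyclically, subject to the single constraint that no two consecutive blocks are both singletons (two singleton blocks in a row put equal colors at distance $3$). So $112233$ repeated is a valid coloring of period $6$, and more generally the block-length sequence can be any two-sided $\{1,2\}$-word with no two consecutive $1$'s; there are uncountably many colorings up to shift, and no finite list of patterns to test. The disjointness fact is nevertheless true, but it needs an argument that handles all block structures at once. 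One that works: encode $j\mapsto f(-j)$ and $h$ by increment sequences $a_j,b_j\in\{0,1\}$ (with $0$ meaning ``stay in the same block''), each avoiding the subwords $00$ and $111$; everywhere-disagreement says the running total of $a_j+b_j$ never crosses a forbidden residue mod $3$, which forces $a_j+b_j=1$ at one residue and $a_j=b_j$ at the other, and a two-line check then shows one of $a,b$ must contain $00$ or $111$. With that step repaired, your proof is complete and coincides with the paper's.
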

\begin{proof}
Suppose we have a 3-coloring $c:F(2^\Delta) \rightarrow \{1,2,3\}$. Let $Z \leq \Delta$ be the subgroup generated by $(1,0)$. As in the proof of Theorem \ref{th:ab2}, for each $Z$-orbit $E$ the function
\begin{equation}
\begin{split}
i \mapsto c((n,0) \cdot x) \textnormal{ such that } x \in E \textnormal{ with } c(x) = i \ \ \ \ \ \ \ \\ \textnormal{ and } n \textnormal{ is the least positive integer with } c((n,0) \cdot x) \neq c(x) 
\end{split}
\end{equation}
is well defined and defines a 3-cycle in $S_3$, call it $\tau_E$. One can easily check that we must have $\tau_{(0,1) \cdot E} = \tau_E\inv$. Therefore, by keeping track of $\tau_E$, we get as in the proof of Theorem \ref{th:4} a 2-coloring of the $Z$-orbits of $F(2^\Delta)$. As in the proof of Theorem \ref{th:4}, if $c$ is Borel this contradicts $\chi_B(F(2^\Z)) = 3$.
\end{proof}

For $\Gamma$, if we make the same definitions we have $\tau_E = \tau_{(0,1) \cdot E}$. Therefore there is no obstruction as above, so we ask
\begin{prob}
What is $\chi_B(F(2^\Gamma))$?
\end{prob}
We conjecture that it is 3. Note that $\Gamma$ contains $\Z^2$ with its usual generators as a marked subgroup, and so this conjecture implies that $\chi_B(F(2^{\Z^2})) = 3$. Recently, it was announced in \cite{GJKS} that this is indeed the case, removing another potential obstruction.

\section*{Acknowledgements}
This research was supported by NSF Grant DMS-1464475.

We thank A.S. Kechris for his guidance throughout the conduction of this research, his many helpful comments on drafts of this paper, and his idea to include Section \ref{sec:iso}.

%I conjecture that Problem 3 can be resolved with a positive answer by investigating $\Z^2$ and the nontrivial $\Z \rtimes \Z$ and that Problem 4 can be resolved with a positive answer using free products to separate the Borel and Baire measurable chromatic numbers. I'd guess Problem 5 has a negative answer but am not sure how one could get at it. It might be better to start with looking for gaps of $>1$ for edge chromatic numbers. Problem 5.37 in \cite{KM} might be the way to go for that.

% In \cite{KM}, it is also asked whether marked groups with isomorphic Cayley graphs must have equal measurable or approximately measurable chromatic numbers. Theorem \ref{th:ab} immediately implies that $\chi_{BM} = \chi_\mu = k$ for the shift graph on $F(2^{\Gamma_k})$ and any Borel probability measure $\mu$ on $F(2^{\Gamma_k})$, where $\chi_{BM}$ is the Baire-measurable chromatic number, $\chi_\mu$ is the $\mu$-measurable chromatic number. Similarly, Theorem \ref{th:non} implies $k \leq \chi_{BM},\chi_\mu \leq k+1$ for the shift graph on $F(2^{\Delta_k})$ and any $\mu$. However, the construction of the $\Z/k\Z$ orbit 2-coloring $f$ from the coloring $c$ in the first half of the proof of Theorem \ref{th:non} preserves Baire measurability and $\mu$-measurablity for any $\mu$ as well, so since $\chi_{BM}(F(2^{\Gamma}

\pagebreak

\end{document}